\documentclass[12pt]{amsart}

\usepackage[english]{babel}

\usepackage[letterpaper,top=3cm,bottom=3cm,left=3cm,right=3cm,marginparwidth=1.75cm]{geometry}

\usepackage{amsmath}
\usepackage{graphicx}
\usepackage{tikz}
\usepackage{tikz-cd}
\usepackage{amsfonts}
\usepackage{amssymb}
\usepackage{amsthm}
\usepackage{algorithm}
\usepackage{algpseudocode}
\usepackage{caption}
\usepackage[colorlinks=true, allcolors=violet]{hyperref}
\usepackage{cleveref}

\newtheorem{Thm}{Theorem}[section]
\newtheorem{Cor}[Thm]{Corollary}
\newtheorem{Lm}[Thm]{Lemma}
\newtheorem{Prop}[Thm]{Proposition}

\newtheorem{LetterThm}{Theorem}

\theoremstyle{definition}
\newtheorem{Defn}[Thm]{Definition}

\newtheorem{Ex}[Thm]{Example}
\newtheorem{Rem}[Thm]{Remark} 

\author{Eoin Mackall}
\address{Department of Mathematics, University of California, Santa Cruz, CA 95064}
\email{emackall@ucsc.edu}

\author{Diego Yépez}
\title[Canonical forms on anisotropic conics]{An algorithmic reduction to canonical forms for vector bundles on anisotropic conics}
\thanks{Throughout the majority of this work, the second author was supported by the IDA Postdoctoral Fellowship}
\address{IDA Center for Communications Research - Princeton, Princeton, NJ 08540}
\email{d.yepez@idaccr.org}
\address{Department of Mathematics, University of Colorado Boulder, Boulder, CO 80309}
\email{diego.yepez@colorado.edu}

\begin{document}

\begin{abstract}
We describe a polynomial complexity algorithm for reducing transition matrices, for vector bundles glued along a clutching-type cover of a real anisotropic conic, to canonical block diagonal forms. This is a generalization, to the real anisotropic form, of the classification of vector bundles on the Riemann sphere by their canonical diagonal forms due to Grothendieck and Birkhoff.

To enable our algorithm, we provide an elementary algebraic proof for the result, due to Biswas-Nagaraj and Novakovic, of the decomposition of vector bundles on real anisotropic conics into sums of indecomposable vector bundles of rank at most 2. While our algorithm and our proof of this decomposition focus solely on the setting of a real anisotropic conic, our methods are immediately generalizable to anisotropic conics over arbitrary fields.
\end{abstract}

\date{April 19, 2026}
\subjclass[2020]{14H60; 14F06; 13C10; 15A21}
\keywords{Vector Bundles; Conics, Severi-Brauer Curves, Decomposition}

\maketitle

\tableofcontents

\section{Introduction}
\subsection*{Background} There are various equivalent statements that describe the decomposition of vector bundles on the complex projective line into direct sum of line bundles, the most well-known of which are due, independently, to Grothendieck \cite{G1957} and to Birkhoff \cite{B1909}.
Formally, if $\mathcal{V}$ is either a rank $n$ holomorphic or algebraic vector bundle on the Riemann sphere, then there is an isomorphism
$$\mathcal{V} \cong \bigoplus\limits_{i=1}^{n}\mathcal{O}_{\mathbb{P}_{\mathbb{C}}^{1}}(\xi_{i})$$
with $\xi_{i} \in \mathbb{Z}$ and moreover the $\xi_{i}$ are unique up to index reordering. 
Over a decade after Grothendieck's proof of this result using cohomology \cite{G1957}, Hazewinkel and Martin in \cite{MR662762} provided what they called an ``elementary" proof of the splitting of vector bundles on $\mathbb{P}_{k}^{1}$, for an arbitrary field $k$, by reducing the problem to an equivalent form as a factorization of matrices. Nowadays, the elementary proof of Hazewinkel and Martin would more accurately be described as an algorithm for reducing transition matrices to an equivalent diagonal canonical form.

A natural setting where these results generalize is gotten by replacing the projective line with any nonsingular conic in the projective plane. Any such conic is isomorphic to the projective line if and only if it contains a point rational over the ground field -- these varieties form the simplest (i.e.\ one dimensional) examples of Severi--Brauer varieties. Corresponding decompositions of vector bundles on a nonsingular plane conic into a sum of indecomposables have been shown to exist by Biswas and Nagaraj \cite{BN2005, BN2007, BN2009} and, independently, by Novakovi{\'c} \cite{Nov2012, Nov2024}. While we work primarily over the real anisotropic conic for concreteness, our methods apply more broadly: the algorithm and classification extend to anisotropic conics over any infinite field, with only minor modifications to the arguments.

Over conics, vector bundles decompose as follows: 

\begin{LetterThm}[Theorem \ref{thm: cantypes}]
Let $k$ be an arbitrary field and let $C\subset \mathbb{P}^2_k$ be a nonsingular conic, i.e.\ there is a regular quadratic form $q$ in three variables such that $C=V(q)$. Let $\Omega_{C}$ denote the cotangent bundle (sheaf of K{\"a}hler differentials) on $C$ and let $\mathcal{V}$ be an arbitrary vector bundle on $C$.

Then there exists a canonical indecomposable rank $2$ bundle $\mathcal{Q}$ on $C$ such that there is an isomorphism $$\mathcal{V} \cong \Big(\bigoplus\limits_{i = 1}^{r_{0}} \Omega_C^{\otimes m_{i}}\Big) \oplus \Big(\bigoplus\limits_{j = 1}^{r_{1}} \mathcal{Q} \otimes \Omega_C^{\otimes n_{j}}\Big)$$ for unique, up to reordering, integer sequences $\{m_i\}_{i=1}^{r_0}$ and $\{n_j\}_{j=1}^{r_1}$.
\end{LetterThm}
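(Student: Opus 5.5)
We would prove the statement by splitting into the isotropic and anisotropic cases, and in the anisotropic case by passing to a quadratic splitting field and descending.

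\emph{Isotropic case.} If $C$ has a rational point, then $C\cong\mathbb{P}^1_k$ and $\Omega_C\cong\mathcal{O}(-2)$. By Grothendieck's theorem, $\mathcal{V}\cong\bigoplus\mathcal{O}(\xi_i)$. We take $\mathcal{Q}=\mathcal{O}\oplus\mathcal{O}(1)$ in this case (it is not indecomposable here, so ``indecomposable'' is really a claim about the anisotropic case). Even-degree summands are powers of $\Omega_C$, and odd-degree ones pair off into copies of $\mathcal{Q}\otimes\Omega_C^{\otimes n}$. Uniqueness then needs a normalization, which we fix by listing odd degrees only through $\mathcal{Q}$. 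So the real content is the anisotropic case.

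\emph{Anisotropic case: descent.} Let $L=k(\sqrt{-d})$ be a quadratic splitting field, so $C_L\cong\mathbb{P}^1_L$. (For an infinite field such $L$ always exists; in general one uses any degree-$2$ point.) Let $\pi:C_L\to C$ be the projection. The Picard group of $C$ is generated by $\Omega_C^{-1}$, which pulls back to $\mathcal{O}(2)$, and $\mathcal{O}_{C_L}(1)$ does not descend. We define $\mathcal{Q}=\pi_*\mathcal{O}_{C_L}(1)$, a rank-$2$ bundle whose pullback splits as $\mathcal{O}(1)\oplus\mathcal{O}(1)$, using that the Galois conjugate of $\mathcal{O}(1)$ is $\mathcal{O}(1)$.

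\emph{Anisotropic case: existence.} We induct on rank, using a Harder--Narasimhan-style filtration. Write $\pi^*\mathcal{V}\cong\bigoplus\mathcal{O}(a_i)$ and let $a=\max a_i$. The key observation is
\[
\mathrm{Hom}(\Omega_C^{\otimes m},\mathcal{V})\otimes L=\mathrm{Hom}(\mathcal{O}(-2m),\pi^*\mathcal{V}).
\]
Suppose $a$ is even, $a=-2m$. Then a nonzero $k$-rational map $\Omega_C^{\otimes m}\to\mathcal{V}$ exists. Its pullback is nonvanishing on fibres, since there are no sections of $\mathcal{O}(a_i-a)$ with $a_i<a$ and so the map lands in the top-degree summand. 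Hence it is a subbundle, and we get an extension
\[
0\to\Omega_C^{\otimes m}\to\mathcal{V}\to\mathcal{V}'\to0.
\]
This extension splits because
\[
\mathrm{Ext}^1(\mathcal{V}',\Omega_C^{\otimes m})\otimes L=H^1\bigl(\bigoplus\mathcal{O}(a-a_i')\bigr)=0,
\]
as all $a_i'\le a$. Now suppose $a$ is odd. We use $\mathcal{Q}\otimes\Omega_C^{\otimes n}$, whose pullback is $\mathcal{O}(a)^2$. A rational map $\mathcal{Q}\otimes\Omega^{n}\to\mathcal{V}$ is, by adjunction, the same as a map $\mathcal{O}(a)\to\pi^*\mathcal{V}$ over $L$. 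Its pullback lands in $\mathcal{O}(a)^{\oplus s}$ as a pair of conjugate maps. Here we must check that these maps are linearly independent in every fibre, i.e. that $\mathcal{Q}\otimes\Omega^n$ embeds as a subbundle. This is the main obstacle. Our plan is to choose the $L$-map $\mathcal{O}(a)\to\mathcal{O}(a)^{s}$ to be a constant vector $v\in L^s$ with $v,\bar v$ independent. For this we need $s\ge2$, which holds because the odd-degree multiplicity of a Galois-stable bundle must be even. The case where $\mathcal{V}$ is defined over $k$ with that structure is handled by the same descent argument. Splitting of the extension then follows from the same $H^1$ vanishing. Finally we induct on rank.

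\emph{Uniqueness and indecomposability.} The integers are recovered from $\pi^*\mathcal{V}$ by Krull--Schmidt on $\mathbb{P}^1_L$. Each $\Omega^{m}$ contributes $\mathcal{O}(-2m)$, and each $\mathcal{Q}\otimes\Omega^{n}$ contributes $\mathcal{O}(1-2n)^2$. So the even multiset gives the $m_i$, and half the odd multiset gives the $n_j$. Indecomposability of $\mathcal{Q}$ follows because any decomposition would be into line bundles on $C$, whose pullbacks have even degree, whereas $\pi^*\mathcal{Q}$ has odd summands.
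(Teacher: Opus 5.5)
Your argument for anisotropic $C$ is correct in outline and takes a genuinely different route from the paper.

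\textbf{The paper's route.} It descends the projections onto the isotypic pieces $\mathcal{O}(m)^{\oplus n_m}$ of a geometric splitting to get summands $\mathcal{V}_m$ over $k$. It then uses Amitsur's theorem and the fact that $[C]$ has index $2$ to see that $\mathrm{End}(\mathcal{V}_m)$ is $M_{n_m}(k)$ or $M_{n_m/2}(Q)$, and cuts $\mathcal{V}_m$ into pieces of rank at most $2$ with diagonal idempotents (Proposition~\ref{prop: decomp1or2}). Finally it identifies each rank-$2$ piece with a twist of $\mathcal{O}_C^{\oplus 2}$ or of the nonsplit extension $\mathcal{Q}$ of $\mathcal{O}_C$ by $\Omega_C$. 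This is done by finding a rational point in the open set of isomorphisms inside $\mathrm{Hom}(\mathcal{F},\mathcal{E})$ (Theorem~\ref{thm: cantypes}).

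\textbf{Your route.} You peel off the top Harder--Narasimhan piece one summand at a time, testing against $\Omega_C^{\otimes m}$ or $\pi_*\mathcal{O}(a)$. Each extension splits because $H^1(\mathbb{P}^1_L,\mathcal{O}(j))=0$ for $j\ge 0$.

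\textbf{What each buys.} Your approach uses no Brauer-group input beyond non-descent of odd-degree line bundles, needs no infinite-field argument, and proves uniqueness explicitly. Your Ext-vanishing is also exactly what makes a splitting exist over $k$. By contrast, the projection onto $\mathcal{O}(m)^{\oplus n_m}$ along an arbitrarily chosen complement is not canonical once two degrees occur. So the Galois-invariance asserted in the proof of Proposition~\ref{prop: decomp1or2} really requires choosing the splitting compatibly. In exchange, the paper's route describes the endomorphism algebras and realizes $\mathcal{Q}$ as an explicit extension class, which yields the transition matrix of Lemma~\ref{lem: 2x2}. Your $\pi_*\mathcal{O}_{C_L}(1)$ has geometric type $\mathcal{O}(1)^{\oplus 2}$, so it is the paper's $\mathcal{Q}\otimes\Omega_C^{\vee}$. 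This only shifts the $n_j$, and independence of $L$ follows from your uniqueness argument.

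Three things need tightening.

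\textbf{The isotropic patch fails.} With $\mathcal{Q}=\mathcal{O}\oplus\mathcal{O}(1)$, each $\mathcal{Q}\otimes\Omega_C^{\otimes n}$ brings an even summand $\mathcal{O}(-2n)$ along with $\mathcal{O}(1-2n)$. So already $\mathcal{O}(1)$ is not of the stated form, and no rank-$2$ choice of $\mathcal{Q}$ can fix this. The statement only holds for anisotropic $C$, which is the setting of the paper's proof; drop the patch.

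\textbf{The odd step.} The condition ``$v,\bar v$ independent'' has no literal meaning, because $\mathcal{O}(a)$ carries no descent datum. After fixing an isomorphism $\sigma^*\mathcal{O}(a)\cong\mathcal{O}(a)$, the second component of $\pi^*\tilde f$ is $\theta(v)$. Here $\theta$ is a $\sigma$-semilinear map on $\mathrm{Hom}(\mathcal{O}(a),\pi^*\mathcal{V})$ with $\theta^2=c\in k^\times$, and $c\notin N(L^\times)$ precisely because $\mathcal{O}(a)$ does not descend. Hence $\theta v=\mu v$ would force $c=N(\mu)$. So \emph{every} nonzero $v$ works, and $s\ge 2$ comes for free. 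Equivalently: $\pi^*\tilde f$ is a constant matrix into the top piece. If it had rank $1$, then $\ker\tilde f$ would be a line bundle on $C$ pulling back to $\mathcal{O}(a)$ with $a$ odd, which is impossible.

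This replaces your two unproved assertions. The first is that odd multiplicities are even; this needs ``descended to $C$'' rather than ``Galois-stable'' ($\mathcal{O}(1)$ is isomorphic to its conjugate), plus an argument. The second is that $s\ge 2$ suffices to choose $v$.

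\textbf{Separability.} Take $L/k$ separable, which is always possible. Both $\pi^*\pi_*\mathcal{F}\cong\mathcal{F}\oplus\sigma^*\mathcal{F}$ and your adjunction $\mathrm{Hom}(\pi_*\mathcal{F},\mathcal{V})\cong\mathrm{Hom}(\mathcal{F},\pi^*\mathcal{V})$ use it.
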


Our main contribution, in this article, is to provide an algorithm to explicitly realize such decompositions in the case of the real anisotropic conic described as the solution set of the equation $x^2+y^2+z^2=0$. Specifically, we prove:

\begin{LetterThm}
Let $C=V(x^2+y^2+z^2)\subset \mathbb{P}^2_\mathbb{R}$. Let $U_1=D_+(z)$ and $U_2=D_+(y)$ be the given basic opens, which cover $C$, and write \[R_1=\Gamma(U_1,\mathcal{O}_{U_1}),\quad R_2=\Gamma(U_2,\mathcal{O}_{U_2}), \quad \mbox{and} \quad R_{12}=\Gamma(U_1\cap U_2,\mathcal{O}_{U_1\cap U_2})\] for the associated coordinate rings. Then any vector bundle $\mathcal{V}$ on $C$ is describable by a canonical transition matrix $M(\mathcal{V})$ on $U_1\cap U_2$ as a block sum $M(\mathcal{V})=\bigoplus_{i\in \mathbb{Z}} M_i^{\oplus m_i}$, for uniquely determined integers $m_i\geq 0$, where \[M_i= (-y/z)^{i/2} \quad \mbox{or}\quad M_i=\begin{bmatrix} (-y/z)^{(i+1)/2}  & (x/z)(-y/z)^{(i-1)/2}  \\ 0  & (-y/z)^{(i-1)/2} \end{bmatrix}\] depending if $i\equiv 0 \pmod{2}$ or $i\equiv 1 \pmod{2}$ respectively.

Moreover, there is an efficient algorithm (i.e.\ an algorithm that is polynomial, in the number of field operations needed, in specific inputs), for reducing any transition matrix $M\in \mathrm{GL}_n(R_{12})$ describing $\mathcal{V}$ to a block diagonal matrix $M(\mathcal{V})$ of the above form by multipliers $L\in \mathrm{GL}_n(R_2)$ and $R\in \mathrm{GL}_n(R_1)$, i.e.\ satisfying \[LMR=M(\mathcal{V}).\] If the multipliers $L$ and $R$ are needed, then they can similarly be found with a polynomial algorithm in the same inputs.
\end{LetterThm}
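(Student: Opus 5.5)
We treat the statement as a double-coset normal form problem, $\mathrm{GL}_n(R_2)\backslash \mathrm{GL}_n(R_{12})/\mathrm{GL}_n(R_1)$, and mimic the Hazewinkel--Martin reduction on $\mathbb{P}^1$. We work throughout with the parameter $t=-y/z$. On $C$ we have $x^2+y^2=-z^2$, so $t$ is a local function whose divisor is supported on the ``points at infinity'' of the two charts, and $R_{12}=R_1[t^{-1}]$. Each of $R_1$ and $R_2$ is a Dedekind domain. Concretely, $R_1=\mathbb{R}[u,v]/(u^2+v^2+1)$ is a PID: every $\mathbb{R}$-point-free degree-$2$ closed point is principal after twisting by $\Omega_C$. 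Its units are only $\mathbb{R}^\times$, and $t$ generates the ideal of the single degree-two point of $C\setminus U_1$ lying in $U_2$ (symmetrically for $R_2$).

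\textbf{Existence of the normal form.} We define a degree function on $R_{12}$ by the order of pole along the two complementary closed points. The algorithm then proceeds in three steps.

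\begin{enumerate}
\item Left-multiplying by $\mathrm{GL}_n(R_2)$, we clear denominators so that $M$ has entries in $R_1$ up to a power of $t$. Using that $R_1$ is Euclidean with respect to the pole-order norm (norm-Euclidean in the variable $t$ after writing $R_1=\mathbb{R}[t,s]$ with $s=x/z$ and $s^2=-1-t^2$), we bring $M$ by column operations over $R_1$ and row operations over $R_2$ to upper triangular form with powers $t^{a_i}$ or the irreducible quadratics on the diagonal. This is Smith/Hermite normal form, with every elementary operation counted in field operations.
\item We then attack the off-diagonal entries. An entry above diagonal entries $t^{a}$ and $t^{b}$ can be killed modulo $t^{a}R_2+t^{b}R_1$, via $e\mapsto e+t^{a}f+gt^{b}$, unless it represents a nonzero class in $H^1(C,\Omega_C^{\otimes(a-b)/2})$-type groups, i.e.\ the Čech cohomology of the relevant twist.
\item We order the degrees as in the Birkhoff argument: we always pair the largest gap first, so that the obstruction groups vanish except when $|a-b|=1$. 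In that case the surviving class is represented by $s=x/z$ times a power of $t$. This produces exactly the $2\times 2$ blocks $M_i$ with $i$ odd, whose triangular shape we read off as the nonsplit extension $0\to\Omega^{\otimes m}\to\mathcal{Q}\otimes\Omega^{\otimes m'}\to\Omega^{\otimes m''}\to 0$.
\end{enumerate}

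\textbf{Uniqueness.} Uniqueness of the $m_i$ we take from Theorem~A (Theorem \ref{thm: cantypes}). The even blocks correspond to the line bundles $\Omega_C^{\otimes m}$ and the odd blocks to $\mathcal{Q}\otimes\Omega_C^{\otimes n}$. Once each block's determinant degree is matched with $i$, the multiplicities are determined, for instance by the dimensions $h^0(\mathcal{V}\otimes\Omega_C^{\otimes k})$ for all $k$.

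\textbf{Complexity.} Each step is a sequence of elementary operations whose count is bounded polynomially in $n$ and in the maximal pole order $d$ of the entries of $M$ and $M^{-1}$. The pole orders never exceed $O(nd)$ during Euclidean reduction, by a determinant-degree argument. We obtain $L$ and $R$ by accumulating the elementary matrices.

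The main obstacle is step 2: showing that repeated off-diagonal clearing terminates with only the $|a-b|=1$ obstructions surviving, and that the surviving obstructions can be simultaneously normalized. The difficulty is that one class $s\,t^k$ might link several blocks. My first attempt would be an induction on rank that splits off a maximal-degree summand using vanishing of $H^1$ of the appropriate negative twist. For this the key computation is the explicit basis of $R_{12}/(R_1+t^{j}R_2)$.
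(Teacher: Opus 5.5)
There is a genuine gap, and it is exactly the step you flag as the main obstacle. Steps 2--3 carry the whole reduction, and the mechanism you describe does not work.

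For a triangular block $\begin{bmatrix} t^a & e\\ 0 & t^b\end{bmatrix}$, row operations over $R_2$ and column operations over $R_1$ change $e$ by $t^bR_2+t^aR_1$ (you have the rings swapped). Multiplying by $t^{-b}$ gives
\[R_{12}/(t^aR_1+t^bR_2)\cong R_{12}/(t^{a-b}R_1+R_2)\cong \mathrm{H}^1(\mathcal{U},\Omega_C^{\otimes(a-b)}).\]
Write $s=x/z$, so that $R_1=\mathbb{R}[t]\oplus s\,\mathbb{R}[t]$ and $R_2=\mathbb{R}[t^{-1}]\oplus st^{-1}\mathbb{R}[t^{-1}]$. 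Then this group has real dimension $2(a-b)-1$ for \emph{every} $a>b$, and it is zero for $a\le b$. So when $a\le b$ nothing survives, not even the Quillen class. When $a>b$, every gap survives, not only gaps of size one.

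Nor is the order at your disposal. Triangular operations cannot permute the diagonal, and every triangular form of $\mathcal{Q}$ has diagonal $(t^a,t^{1-a})$ with $a\ge 1$. A nonzero class also does not prevent the bundle from being a sum of line bundles. For example, $\begin{bmatrix} t^2 & t\\ 0&1\end{bmatrix}$ has gap $2$ and $t\notin t^2R_1+R_2$, yet
\[\begin{bmatrix}1&0\\ -t^{-1}&1\end{bmatrix}\begin{bmatrix} t^2 & t\\ 0&1\end{bmatrix}\begin{bmatrix}0&-1\\ 1&t\end{bmatrix}=\begin{bmatrix} t&0\\ 0&t\end{bmatrix}=M_2^{\oplus 2}.\]
So the diagonal of a Hermite form does not determine $M(\mathcal{V})$. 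You would need two further steps, and the proposal supplies neither:
\begin{enumerate}
\item a re-triangularization producing a form in which every unfavourable gap is at most one (for instance a filtration by real line subbundles of maximal degree, found algorithmically with degree control);
\item a simultaneous normalization of the surviving $s$-classes linking several blocks into the blocks $M_i$.
\end{enumerate}

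Several supporting claims are also wrong.
\begin{itemize}
\item $R_1=\mathbb{R}[s,t]/(s^2+t^2+1)$ is the standard example of a PID that is \emph{not} Euclidean for any function. Its units are $\mathbb{R}^\times$ while all residue fields are $\mathbb{C}$, so it has no universal side divisor. Concretely, dividing $t$ by $s$ with a constant remainder $r$ would need $s\mid t-r$, which is impossible since $t=\pm i$ on $V(s)$.
\item Hermite forms still exist because $R_1$ is a PID. However, your $O(nd)$ bound on pole orders during the reduction is only asserted. Uncontrolled degree growth is precisely why the paper does not adapt Hazewinkel--Martin.
\item Your uniqueness invariant is insufficient. $\mathcal{O}_C^{\oplus 4}$ and $\mathcal{Q}\oplus(\mathcal{Q}\otimes\Omega_C^{\otimes -1})$ have complex types $\mathcal{O}^4$ and $\mathcal{O}(-1)^2\oplus\mathcal{O}(1)^2$, yet both have $h^0(\mathcal{V}\otimes\Omega_C^{\otimes k})=\max(0,4-8k)$ for all $k$. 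You must also use the twists $\mathcal{V}\otimes\mathcal{Q}\otimes\Omega_C^{\otimes k}$, or simply Grothendieck's uniqueness over $\mathbb{C}$, since $M_i$ complexifies to copies of $\mathcal{O}(-i)$.
\end{itemize}

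The paper never re-triangularizes over $\mathbb{R}$. Existence of the canonical form comes from Theorem \ref{thm: cantypes}, via descent of idempotents, Amitsur's theorem, and the nonempty open subset of $\mathrm{Hom}(\mathcal{F},\mathcal{E})$. The algorithm then proceeds as follows:
\begin{enumerate}
\item It complexifies via $\phi$ and uses polynomial Smith forms over $\mathbb{C}[w]$ and $\mathbb{C}[w^{-1}]$, with known degree bounds, to push all zeros and poles to $w=0,\infty$.
\item It reads the complex splitting type off Ephremidze's factorization, which determines $M(\mathcal{V})$ by Theorem \ref{thm: cantypes}.
\item It finds real multipliers by solving linear systems whose degree bounds come from Corollary \ref{cor: bounds}.
\end{enumerate}
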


Theorem B is a combination of several distinct results that span the entirety of this article (Theorems \ref{thm: cantypes}, \ref{thm: alg}, and \ref{thm: multipliers}). The proofs for these theorems rely almost entirely on elementary algebraic geometry and linear algebra but, we also occasionally appeal to some general results on Brauer groups. We should point out that there is also nothing particularly special about the real anisotropic conic that's used in our arguments, except that it provides us with a particular set of equations to work with. All of our results, and most of our arguments, generalize to arbitrary infinite fields; throughout the article we point out how to make the appropriate generalizations.

To illustrate the algorithm concretely, we conclude the paper with an explicit worked example (Example \ref{Concluding-Ex}), computing the canonical form and multipliers for the tensor square of the Quillen bundle.

\subsection*{Methodology}
The starting point for our work is a direct generalization of the usual clutching construction that's utilized in \cite{MR662762}, for vector bundles on $\mathbb{P}^1$, to the setting of nonsingular plane conics. In the standard clutching construction, one observes that any vector bundle on $\mathbb{P}^1$ is trivialized on the cover \[U_0=\mathbb{P}^1\setminus \{\infty\}\quad \mbox{and}\quad U_\infty=\mathbb{P}^1\setminus \{0\}.\] Here each open $U_0$ and $U_\infty$ is isomorphic with the affine line $\mathbb{A}^1$.

In our framework, one works with a conic $C\subset \mathbb{P}^2$, one chooses lines $l_1\neq l_2\subset \mathbb{P}^2$, and one considers the cover \[U_1=C\setminus (C\cap l_1)\quad \mbox{and}\quad U_2=C\setminus (C\cap l_2).\] For a nonsingular conic $C$, every vector bundle on $C$ is trivialized over this cover. Moreover, if $C$ is anisotropic, then one can choose lines $l_1$ and $l_2$ such that $U_1$ and $U_2$ are a particular type of torus torsor (see Remark \ref{rem: toruscover} for a more precise description). In our main example of a real anisotropic conic, we take $l_1=V(z)$ and $l_2=V(y)$. The opens $U_1$ and $U_2$ are then isomorphic to the unique nontrivial $S^1$-torsor, where $S^1$ is the real unit circle form of $\mathbb{G}_{m,\mathbb{C}}$.

In Section \ref{sec: clutch}, we explain how one can classify line bundles on $C$. In principle, the rational function $l_1/l_2$ defines the transition function for a nontrivial generator of the Picard group of $C$. We then explain, in Section \ref{sec: rank2bundles}, how results on Brauer groups can be used to deduce the decomposition of a vector bundle as stated in Theorem A above. In the process, we explicitly deduce a transition matrix for the indecomposable rank $2$ Quillen bundle $\mathcal{Q}$, see Lemma \ref{lem: 2x2}. These explicit $1\times 1$- and $2\times 2$-transition functions, along with the associated description of vector bundles given in Theorem A, allows one to explicitly describe all possible canonical forms for transition functions on the cover $U_1,U_2$, cf.\ Theorem \ref{thm: cantypes}.

Our algorithm for reducing an arbitrary transition matrix $M$ to its block-diagonal canonical form described above then goes as follows. We first identify a quadratic Galois splitting field $F/k$ of our conic $C$, together with a splitting isomorphism $\mathbb{P}^1_F\cong C_F$. Up to composing with a linear fractional transformation, we can assume that the four points $(l_1\cap C)_F$ and $(l_2\cap C)_F$ are sent to $\{0,1\}$ and $\{\lambda,\infty\}$, respectively, under this isomorphism. Utilizing the Smith normal form for polynomial matrices, we can remove the zeros and poles of $M$ except, possibly, for zeros and poles at $0$ and $\infty$ to get an equivalent matrix $M'$. 

At this point one can use any algorithm (e.g.\ from either of \cite{MR662762, Ephremidze2025}) that finds the Grothendieck-Birkhoff factorization of this new matrix $M'$, and read the equivalent canonical form of $M$ from the resulting factorization (Theorem \ref{thm: alg}). We note that, in order to obtain our polynomial complexity bounds, we can't use a direct adaptation of the Hazewinkel-Martin algorithm from \cite{MR662762}. Unfortunately, the procedure that's described in \cite{MR662762} allows for possible growth in the degrees of the matrix entries, and no polynomial complexity bound is available for this growth. Instead, we make use of the algorithm of Ephremidze \cite{Ephremidze2025}, which avoids this growth issue and allows for a simple polynomial complexity analysis. Finally, if one also requires explicit multipliers achieving this reduction, then they can be found by solving a small set of linear systems over the ground field $k$ (Theorem \ref{thm: multipliers}).

\subsection*{Future Directions}
The explicit algebraic and matrix-theoretic nature of our results was motivated by future applications involving connections. To make this precise, recall that given a monodromy representation $\rho$ of $\mathbb{P}^1_\mathbb{C}\setminus \{p_1,...,p_m\}$, for $m$ points $p_1,...,p_m$, Deligne's canonical extension produces a vector bundle with logarithmic connection $(\mathcal{V}_{Log(\rho)}, \nabla_{Log(\rho)})$ over $\mathbb{P}^1_\mathbb{C}$. Because of the Grothendieck-Birkhoff description of vector bundles on $\mathbb{P}_\mathbb{C}^1$, there is an interest in understanding $\mathcal{V}_{Log(\rho)}$ from the description of $\rho$; we refer the reader to \cite{Yep25, Yepez2025} where this question is considered in the case $m\leq 3$.

One can choose that the points $p_1,...,p_m$ are arranged into conjugate pairs under the action of $\mathrm{Gal}(\mathbb{C}/\mathbb{R})$ via an explicit splitting $\varphi: \mathbb{P}^1_\mathbb{C} \xrightarrow{\sim} C_\mathbb{C}$, so that the configuration of points
is naturally defined in the anisotropic conic $C$ over $\mathbb{R}$ to begin with. This provides an action of the Galois group $\mathrm{Gal}(\mathbb{C}/\mathbb{R})$ on $(\mathcal{V}_{Log(\rho)}, \nabla_{Log(\rho)})$. A natural question, in the spirit of 
\cite{Kat70, Katz90}, is then:
\begin{quote}
\textit{Under what conditions on the monodromy representation $\rho$ does the 
logarithmic connection $(\mathcal{V}_{Log(\rho)}, \nabla_{Log(\rho)})$ admit a 
$\mathrm{Gal}(\mathbb{C}/\mathbb{R})$-equivariant structure, and hence descend 
to a vector bundle with connection defined on the anisotropic 
conic $C$ over $\mathbb{R}$?}
\end{quote}
The canonical transition matrices of Theorem B provide an explicit framework in which the Galois action on the bundle is concretely describable. A descent of the connection would amount to a compatibility condition between the connection matrix of $\nabla_{Log(\rho)}$ and this Galois action, expressible directly in terms of the matrices $L$ and $R$ produced by our algorithm. We leave the systematic investigation of this question to future work.

\vspace{0.3cm}
\noindent\textbf{Acknowledgments}. The second author would like to thank Wayne Raskind for the opportunity to carry out part of this work during his time as an IDA Postdoctoral Fellow, and Jeff Vanderkam for his constant support and hospitality throughout that period. He would also like to thank the Mathematics Department at Princeton University and IDA/CCR-P for providing an excellent research environment, 
and Nick Katz for the many insights that have shaped his mathematical outlook during his time as his postdoctoral advisee.

\section{Clutching and Vector Bundles over Conics}\label{sec: clutch}
Throughout this section we work over the real numbers $\mathbb{R}$, and we use the quadratic extension $\mathbb{C}/\mathbb{R}$ with Galois group $\mathrm{Gal}(\mathbb{C}/\mathbb{R})=\{1,\tau\}$ where $\tau$ is complex conjugation. All of the results in this section can be suitably modified to work over an arbitrary base field $k$, making use of an appropriate separable quadratic extension $F/k$. We point out the necessary modifications in Remark \ref{rem: toruscover} below.

Consider the real anisotropic conic \begin{equation}\label{eq: anicon}\tag{RAC} C=V(x^2+y^2+z^2)\subset \mathbb{P}^2_{x,y,z}.\end{equation} Our goal is to describe a clutching construction that allows us to classify all vector bundles on $C$ in analogy with the Grothendieck-Birkhoff theorem for $\mathbb{P}^1$. In this section, our goal is to identify a cover of $C$ by open subschemes $U_1$ and $U_2$ such that all vector bundles on $C$ are trivialized over this cover.

Set $U_1=C \setminus (C\cap V(z))$ and $U_2= C\setminus (C\cap V(y))$. 
Both $U_1$ and $U_2$ are basic affine open subschemes of $C=\mathrm{Proj}(\mathbb{R}[x,y,z]/(x^2+y^2+z^2))$. One can check that, if \[R_1=\mathbb{R}\left[\frac{x}{z}, \frac{y}{z}\right]\bigg/\left(\left(\frac{x}{z}\right)^2+\left(\frac{y}{z}\right)^2+1\right)\mbox{ and } R_2=\mathbb{R}\left[\frac{x}{y}, \frac{z}{y}\right]\bigg/\left(\left(\frac{x}{y}\right)^2+\left(\frac{z}{y}\right)^2+1\right),\] then $U_i=\mathrm{Spec}(R_i)$ for $i=1,2$. 
Note that $R_i\otimes_\mathbb{R} \mathbb{C}\cong \mathbb{C}[s,1/s]$, e.g.\ for $R_1$ via the map \[\mathbb{C}[s,1/s]\cong R_1\otimes_{\mathbb{R}} \mathbb{C}\quad \mbox{defined by} \quad s\mapsto \frac{x}{z}\otimes i-\frac{y}{z}\otimes 1.\] 
In particular, we have \[R_i=(\mathbb{C}[s,1/s])^{\mathrm{Gal}(\mathbb{C}/\mathbb{R})}\] where $\tau\in \mathrm{Gal}(\mathbb{C}/\mathbb{R})$ acts as $\tau(\lambda)=\overline{\lambda}$ for all $\lambda\in \mathbb{C}$ and $\tau(s)=-1/s$.

Using this action, we can show directly that:

\begin{Lm}
If $M$ is a finitely generated projective module over $R_i$, for either $i=1,2$, then $M$ is free.
\end{Lm}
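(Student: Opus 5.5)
The plan is to use Galois descent from $S=\mathbb{C}[s,1/s]$, which is a PID, and to reduce the statement to the vanishing of a cohomology set. Let $P$ be finitely generated projective over $R=R_i$. Since $\mathrm{Spec}(R_i)$ is a connected smooth affine curve, its rank is constant, say $n$ (it is irreducible, being a form of $\mathbb{G}_m$). Because $S$ is a PID, the base change $P\otimes_{\mathbb{R}}\mathbb{C}=P\otimes_R S$ is free of rank $n$ over $S$. It carries the semilinear action $\tau\otimes\tau$, and $P$ is recovered as its invariants (faithfully flat Galois descent for $\mathbb{C}/\mathbb{R}$, using $R=S^{\mathrm{Gal}}$).

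Fix an $S$-basis $e_1,\dots,e_n$ of $P\otimes S$. Then $\tau$ acts by $v\mapsto A\,\tau(v)$ for some $A\in\mathrm{GL}_n(S)$ satisfying the cocycle condition $A\,\tau(A)=I$. Conversely, $P$ is free exactly when this cocycle is a coboundary, i.e.\ $A=B\,\tau(B)^{-1}$ for some $B\in\mathrm{GL}_n(S)$. In that case the columns of $B$ form a $\tau$-invariant basis, and their $R$-span is all the invariants by descent. So the lemma is equivalent to $H^1(\mathrm{Gal}(\mathbb{C}/\mathbb{R}),\mathrm{GL}_n(S))$ being trivial, for the twisted action $\tau(s)=-1/s$.

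To prove this I would reduce to line bundles. A finitely generated projective module over a one-dimensional Noetherian domain $R$ is isomorphic to $R^{n-1}\oplus \det P$ (Serre/Steinitz for Dedekind domains; $R$ is Dedekind since $U_i$ is a smooth affine curve). It therefore suffices to show $\mathrm{Pic}(R)=0$. Here the hypothesis $\tau(s)=-1/s$ enters.

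For $\mathrm{Pic}$, use the Hochschild--Serre/descent exact sequence
\[0\to H^1(G,S^\times)\to \mathrm{Pic}(R)\to \mathrm{Pic}(S)^G=0.\]
The last term vanishes because $S$ is a PID. We have $S^\times=\mathbb{C}^\times\times s^{\mathbb{Z}}$, with $\tau$ acting by conjugation on $\mathbb{C}^\times$ and by $s\mapsto -s^{-1}$. A cocycle is a unit $a=\lambda s^m$ with $a\,\tau(a)=1$. Since $\tau(a)=\bar\lambda(-1)^m s^{-m}$, this gives $|\lambda|^2(-1)^m=1$, which forces $m$ even and $|\lambda|=1$. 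A coboundary has the form $b/\tau(b)$ with $b=\mu s^k$, which equals $(\mu/\bar\mu)(-1)^k s^{2k}$. Taking $k=m/2$ and choosing $\mu$ with $\mu/\bar\mu=\lambda(-1)^{k}$ (possible since that is a unit-modulus number) shows $a$ is a coboundary. Hence $H^1=0$ and $\mathrm{Pic}(R)=0$, so $P\cong R^n$.

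The main obstacle is the step identifying $\mathrm{Pic}(R)$ with $H^1(G,S^\times)$. If one wants to avoid citing Hochschild--Serre, this can be done by hand: an invertible $R$-module $L$ has $L\otimes S\cong S$ via some generator $e$, and $\tau(e)=a e$ with $a\in S^\times$ a cocycle. Writing $a=b/\tau(b)$ makes $b^{-1}e$ invariant, and descent shows that it generates $L$. The equal parity constraint computed above is exactly where anisotropy is used. With the untwisted action $s\mapsto 1/s$ the odd-degree class would survive, but the sign in $\tau(s)=-1/s$ kills it.
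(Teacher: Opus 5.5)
Your proof is correct and follows the paper's route: reduce to $\mathrm{Pic}(R_i)=0$ via the Dedekind structure theorem, descend along $\mathbb{C}[s,1/s]$ with $\tau(s)=-1/s$, derive $(-1)^m|\lambda|^2=1$ to force $m$ even, and exhibit an explicit coboundary. Your single choice $\mu/\bar\mu=\lambda(-1)^{k}$ covers the two parities of $m/2$ that the paper handles separately. One small slip: with $\tau(e)=ae$ and $a=b/\tau(b)$, the invariant generator is $be$, not $b^{-1}e$; this is consistent with your own matrix convention, in which the columns of $B$ are the invariant vectors.
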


\begin{proof}
The rings $R_i$ are both Dedekind domains (which can be checked after base change along the extension $\mathbb{C}/\mathbb{R})$. 
As such, all finitely generated projective modules are sums of invertible modules, cf.\ \cite[Corollary 1.4.6]{MR1282290}. 
So, it suffices to show that $\mathrm{Pic}(R_i)=0$.

Let $M$ be an invertible module over $R_i$. Since $M\otimes_{\mathbb{R}} \mathbb{C}$ is free as $\mathbb{C}[s,1/s]$ is a principal ideal domain, we can determine $M$ (up to isomorphism) by the class of a $1$-cocycle from the Galois cohomology group 
\begin{eqnarray}
    \nonumber
    \mathrm{H}^1(\mathrm{Gal}(\mathbb{C}/\mathbb{R}), \mathrm{Aut}_{\mathbb{C}}(\mathbb{C}[s,1/s])).
\end{eqnarray}
If we write $\alpha:\mathrm{Gal}(\mathbb{C}/\mathbb{R})\rightarrow \mathrm{Aut}_{\mathbb{C}}(\mathbb{C}[s,1/s])$ for a normalized $1$-cocycle representative of this class with $\alpha(1)$ the identity, then $\alpha$ is determined completely by the image of $\tau$.

We make the identification $\mathrm{Aut}_{\mathbb{C}}(\mathbb{C}[s,1/s])=\coprod_{i\in \mathbb{Z}} \mathbb{C}^\times s^i$. 
We can then write \[\alpha(\tau)=\lambda s^n\] for some $n\in \mathbb{Z}$. 
The cocycle condition gives us the relation \[1=\alpha(\tau)\cdot \tau(\alpha(\tau))=(\lambda s^n)\cdot ((-1)^n\overline{\lambda}s^{-n})=(-1)^n|\lambda|^2.\] 
This equation can hold for a given $\lambda\in \mathbb{C}$ if and only if $n \equiv 0 \pmod{2}$. 
In this case we also find that $|\lambda|=1$.

We claim that any such $1$-cocycle is trivial. 
To see this, assume first $m=n/2$ is even. 
Write $\lambda=e^{i\theta}$ and set $c=e^{-i(\theta/2)}s^{-m}$. 
Then \[\alpha(1)=1=c^{-1}\cdot c\] and \[\alpha(\tau)=\lambda s^n=e^{i(\theta/2)}s^m\cdot e^{i(\theta/2)}s^m=c^{-1}\cdot \tau(c).\] Hence $\alpha$ is trivial, as claimed. 
If, on the other hand, $m=n/2$ is odd then write $\lambda=e^{i\theta}$ and set $c=-e^{-i(\theta/2+\pi/2)}s^{-m}$. 
Then \[\alpha(1)=1=c^{-1}\cdot c\] and \[\alpha(\tau)=\lambda s^n=-e^{i(\theta/2+\pi/2)}s^m \cdot(-1)^{m+1}(e^{i(\theta/2+\pi/2)})s^m=c^{-1}\cdot \tau(c)\] again.

It follows that \[\mathrm{H}^1(\mathrm{Gal}(\mathbb{C}/\mathbb{R}), \mathrm{Aut}_{\mathbb{C}}(\mathbb{C}[s,1/s]))=0\] and hence $\mathrm{Pic}(R_i)=0$ too.
\end{proof}

Now let $\mathcal{V}$ be an arbitrary vector bundle on $C$. 
Write $\mathcal{V}_i=\mathcal{V}|_{U_i}$ and $\mathcal{V}_{12}=\mathcal{V}|_{U_1\cap U_2}$. 
Since $\mathcal{V}_i$ is free for each of $i=1,2$ by the above lemma, we have that $\mathcal{V}_{12}$ is free as well. 
We want to describe all such $\mathcal{V}$.

One can check that $U_1\cap U_2=\mathrm{Spec}(R_{12})$ where \[R_{12}=\mathbb{R}\left[\frac{x}{y},\frac{x}{z}, \frac{z}{y},\frac{y}{z}\right]\bigg/\left(\left(\frac{x}{y}\right)^2+\left(\frac{z}{y}\right)^2+1\right).\] 
Each vector bundle $\mathcal{V}$ on $C$ is determined on the cover $\mathcal{U}=\{U_1,U_2\}$ by the class of a \v{C}ech $1$-cocycle from the Zariski \v{C}ech cohomology sets $\mathrm{H}^1(\mathcal{U},\mathrm{GL}_n(\mathcal{O}_C))$ varying $n\geq 1$. Any such \v{C}ech $1$-cocycle is determined exactly by one element of $\mathrm{GL}_n(R_{12})$. Two such $1$-cocycles, considered as matrices $M,N\in \mathrm{GL}_n(R_{12})$, define equivalent bundles if and only if there are matrices $V_1\in \mathrm{GL}_n(R_1)$ and $V_2\in \mathrm{GL}_n(R_2)$ such that \[V_2MV_1=N.\]
In the next section, given a vector bundle $\mathcal{V}$ on $C$, we describe a canonical representative $M\in \mathrm{GL}_n(R_{12})$ for this bundle.

\begin{Rem}\label{rem: coverarbfield}
Now let $k$ be an arbitrary field and $C\subset \mathbb{P}^2$ be any smooth conic over $k$. A cover $U_1, U_2$ of $C$ that trivializes any vector bundle $\mathcal{V}$ over $C$ be can be gotten in a similar way to the above, as we now explain.

There are two cases: either $C\cong \mathbb{P}^1$ or $C(k)=\emptyset$. Since the clutching construction for $\mathbb{P}^1$ corresponding to the cover by two opens, each obtained from removing a $k$-rational point, is well-known in the first case, we focus on the second case where $C(k)=\emptyset$.

Let $H_1,H_2\subset \mathbb{P}^2$ be two distinct lines. B{\'e}zout's theorem implies that $\mathrm{deg}(C\cap H_i)=2$. It's well-known that there is an isomorphism $\mathrm{CH}_0(C)\cong \mathbb{Z}$ with a generator the class of any closed point on $C$ with residue field of degree $2$ over $k$, see \cite[Corollary 7.3]{MR2278759}. It follows that both $C\cap H_i$ for $i=1,2$ generate the Chow group $\mathrm{CH}_0(C)$.

For each of $i=1,2$ the localization sequence associated to the closed embedding $C\cap H_i\subset C$ is a short exact sequence \[\mathrm{CH}_0(C\cap H_i)\rightarrow \mathrm{CH}_0(C)\rightarrow \mathrm{CH}_0(C\setminus (C\cap H_i))\rightarrow 0.\] The leftmost map in the sequence above is an isomorphism by the previous paragraph. In particular, if we set $U_i=C\setminus (C\cap H_i)$ for $i=1,2$ then the $U_i$ cover $C$ and $\mathrm{Pic}(U_i)=\mathrm{CH}_0(U_i)=0$. Also, each $U_i\subset \mathbb{A}^2=\mathbb{P}^2\setminus H_i$ is affine with coordinate ring a Dedekind domain. Therefore the vanishing $\mathrm{Pic}(U_i)=0$ again implies that every vector bundle on the opens $U_i$ for $i=1,2$ are trivial.
\end{Rem}

\begin{Rem}\label{rem: toruscover}
In the primary example of this article, the real anisotropic conic $C\subset \mathbb{P}^2$, we use a cover by open subschemes $U_1,U_2$ which satisfy the additional conditions:
\begin{itemize}
\item the complement of each open $U_i$ is a closed point $p_i$ with residue field $\mathbb{C}/\mathbb{R}$,
\item each open subscheme $U_i$ is naturally a torsor for the torus $S^1=\mathrm{Res}_{\mathbb{C}/\mathbb{R}}(\mathbb{G}_{m})/\mathbb{G}_{m}$,
\item under the connecting map of group cohomology $\delta:\mathrm{H}^1(\mathbb{C}/\mathbb{R}, S^1)\rightarrow \mathrm{H}^2(\mathbb{C}/\mathbb{R}, \mathbb{G}_m)$ coming from the short exact sequence \[ 1\rightarrow \mathbb{G}_m\rightarrow \mathrm{Res}_{\mathbb{C}/\mathbb{R}}(\mathbb{G}_m)\rightarrow S^1\rightarrow 1,\] we have $\delta([U_i])=[C]$ for the canonical class $[C]\in \mathrm{Br}(\mathbb{C}/\mathbb{R})=\mathrm{H}^2(\mathbb{C}/\mathbb{R}, \mathbb{G}_m)$.
\end{itemize}
Although these properties aren't necessary for us, they are interesting properties of the cover $U_1,U_2$ that may be useful in other contexts. In this remark, we show how one can construct, for an arbitrary anisotropic conic over any base field $k$, a cover of the form found in Remark \ref{rem: coverarbfield} but, satisfying additional conditions analogous to those above.

So, let $k$ be any field that admits a smooth anisotropic conic $C\subset \mathbb{P}^2$ defined over $k$. Then there exists a separable field extension $F/k$ of degree $2$ (hence a Galois extension of $k$) and an isomorphism $C_F\cong \mathbb{P}^1_F$. We can therefore find two distinct closed points $p_1,p_2$ on $C$ with residue field $F$.

Each point $p_i$ splits over $F$ as a union of two distinct $F$ points, \[p_i\times_k F = \{p_{i,1}, p_{i,2}\}\subset C_F \subset \mathbb{P}^2_F.\] Let $H_i'$ be the unique $F$-line inside $\mathbb{P}^2_F$ containing both $p_{i,1}$ and $p_{i,2}$. The action of the Galois group $\mathrm{Gal}(F/k)$ swaps the points $p_{i,1}$ and $p_{i,2}$, for each of $i=1,2$, and so $\mathrm{H}_i'$ must be a $\mathrm{Gal}(F/k)$-stable subspace of $\mathbb{P}^2_F$. As such, it descends to a line $H_i\subset \mathbb{P}^2$ that satisfies $C\cap H_i=p_i$. The two complements $U_i=C\setminus p_i$ for $i=1,2$ are of the form found in Remark \ref{rem: toruscover} and, as explained in \cite[Section 3]{MR4321613}, we have:

\begin{itemize}
\item each open subscheme $U_i$ is naturally a torsor for the torus $T=\mathrm{Res}_{F/k}(\mathbb{G}_{m})/\mathbb{G}_{m}$,
\item under the connecting map of group cohomology $\delta:\mathrm{H}^1(F/k, T)\rightarrow \mathrm{H}^2(F/k, \mathbb{G}_m)$ coming from the short exact sequence \[ 1\rightarrow \mathbb{G}_m\rightarrow \mathrm{Res}_{F/k}(\mathbb{G}_m)\rightarrow T\rightarrow 1,\] we have $\delta([U_i])=[C]$ for the canonical class $[C]\in \mathrm{Br}(F/k)=\mathrm{H}^2(F/k, \mathbb{G}_m)$.
\end{itemize}
\end{Rem}

\section{Decomposition of Corresponding Matrices}\label{sec: rank2bundles}
In this section we have three goals: 
\begin{enumerate}
\item First, we show that every vector bundle on an anisotropic conic $C\subset \mathbb{P}^2$ -- over an arbitrary base field -- decomposes into a direct sum of indecomposable bundles of rank at most $2$.
\item Second, we describe explicit transition matrices for both the rank $2$ Quillen bundle and for all line bundles on the real anisotropic conic (\ref{eq: anicon}). 
\item Third, we provide another proof of the well-known result that all vector bundles on the real anisotropic conic can be gotten as a sum of line bundles and twists of the Quillen bundle.
\end{enumerate}
The combination of (1), (2), and (3) above allow us to describe a complete list of transition matrices for the cover $U_1,U_2$ from Section \ref{sec: clutch} that describes all vector bundles on the conic (\ref{eq: anicon}).

\begin{Prop}\label{prop: decomp1or2}
Let $k$ be an arbitrary field. Suppose that $C\subset \mathbb{P}^2$ is an anisotropic conic over $k$. Let $\mathcal{V}$ be a nonzero vector bundle on $C$. Then $\mathcal{V}$ decomposes into a direct sum of indecomposable vector bundles of rank at most $2$.

In particular, if $\mathcal{V}$ is an indecomposable vector bundle on $C$, then $\mathrm{rank}(\mathcal{V})\leq 2$.
\end{Prop}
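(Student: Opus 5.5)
We prove the statement by induction on the rank, working with endomorphism algebras and Galois descent to $\mathbb{P}^1_F$, where $F/k$ is a separable quadratic splitting field as in Remark \ref{rem: toruscover}. The basic input is the Grothendieck--Birkhoff theorem over $F$:
\[\mathcal{V}_F\cong \bigoplus_i \mathcal{O}_{\mathbb{P}^1_F}(a_i).\]
Write $d$ for the largest integer with $\mathcal{O}(d)$ appearing as a summand, say with multiplicity $r$. The subbundle $\mathcal{W}'=\mathcal{O}(d)^{\oplus r}\subset \mathcal{V}_F$ is the image of the evaluation map
\[\mathrm{H}^0(\mathcal{V}_F(-d))\otimes \mathcal{O}(d)\rightarrow \mathcal{V}_F.\]
It is therefore canonical, so it is stable under the semilinear $\mathrm{Gal}(F/k)$-action and descends to a subbundle $\mathcal{W}\subset \mathcal{V}$. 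The plan has two parts. First we show that $\mathcal{W}$ splits off $\mathcal{V}$ as a direct summand. Then we show that $\mathcal{W}$ decomposes into pieces of rank at most $2$. Applying induction to the quotient $\mathcal{V}/\mathcal{W}$ finishes the argument.

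For the splitting, the obstruction to $0\to\mathcal{W}\to\mathcal{V}\to\mathcal{V}/\mathcal{W}\to 0$ splitting lies in $\mathrm{Ext}^1_C(\mathcal{V}/\mathcal{W},\mathcal{W})$. Flat base change gives
\[\mathrm{Ext}^1_C(\mathcal{V}/\mathcal{W},\mathcal{W})\otimes_k F\cong \mathrm{H}^1\big(\mathbb{P}^1_F,(\mathcal{V}/\mathcal{W})_F^\vee\otimes \mathcal{O}(d)^{r}\big).\]
Every summand of $(\mathcal{V}/\mathcal{W})_F$ has degree less than $d$, so the bundle on the right is a sum of $\mathcal{O}(e)$ with $e\ge 1$, and its $\mathrm{H}^1$ vanishes. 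Hence the extension splits over $k$.

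Next we analyse $\mathcal{W}$. Since $\mathcal{W}_F\cong\mathcal{O}(d)^{r}$, the algebra $A=\mathrm{End}(\mathcal{W})$ satisfies $A\otimes_k F\cong M_r(F)$, so $A$ is a central simple $k$-algebra split by $F$. Its Brauer class lies in $\mathrm{Br}(F/k)$, and the key claim is that this class is $e[C]$ with $e\equiv d \pmod 2$. To prove the claim, twist by a power of $\Omega_C$, which has degree $-2$ on $\mathbb{P}^1_F$, to reduce to $d\in\{0,1\}$. If $d=0$, then $\mathcal{W}_F$ is trivial and $\mathcal{W}=\mathrm{H}^0(\mathcal{W})\otimes\mathcal{O}_C$, which is already a sum of line bundles. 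If $d=1$, then $\mathcal{W}$ is the descent of $\mathcal{O}(1)^{r}$. Here $\mathrm{Hom}(\mathcal{O}(1),\mathcal{W})$ over $F$ is $r$-dimensional, and the Galois-twisted action makes it a module over the quaternion algebra $D$ attached to $C$. Concretely, $A$ is Brauer equivalent to $D$ because the descent datum on $\mathcal{O}(1)$ fails to exist exactly by the class $[C]$. So $A\cong M_m(D)$ with $r=2m$.

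The decomposition then follows. Idempotents of $A$ give a decomposition of $\mathcal{W}$, since the category of vector bundles is Karoubian. When $A\cong M_r(k)$ the primitive idempotents produce rank-$1$ summands. When $A\cong M_m(D)$ with $D$ a division algebra they produce $m$ summands $\mathcal{Q}'$ of rank $2$, each indecomposable because its endomorphism algebra is the division algebra $D$. Induction on the rank then yields the proposition. The "in particular" clause is immediate.

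The main obstacle is the identification of the Brauer class of $\mathrm{End}(\mathcal{W})$ in the case $d=1$. Our first attempt will be to compute it through the Galois cocycle $\mathcal{W}_F\cong\mathcal{O}(1)^r$: the descent isomorphism $\tau^*\mathcal{O}(1)\cong\mathcal{O}(1)$ is unique up to $F^\times$, and its composite with its own conjugate is a scalar in $k^\times$ that represents $[C]$ in $k^\times/N(F^\times)=\mathrm{Br}(F/k)$. If this cocycle bookkeeping proves awkward, the fallback is the cruder fact that $A$ is split by a quadratic extension. Then $A\cong M_r(k)$ or $A\cong M_{r/2}(D)$ for a quaternion division algebra $D$, and that alone already gives summands of rank at most $2$.
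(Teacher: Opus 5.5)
Your proof is correct, but it differs from the paper's at two points.

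First, to descend the isotypic decomposition $\mathcal{V}_F\cong\bigoplus_m\mathcal{O}(m)^{\oplus n_m}$, the paper simply asserts that the projector onto $\mathcal{O}(m)^{\oplus n_m}$ is $\mathrm{Gal}(F/k)$-invariant. For an arbitrary splitting this is not automatic. The complement can be moved by nonzero maps $\mathcal{O}(a)\to\mathcal{O}(d)$ with $a<d$, and these need not be Galois-fixed. For non-maximal $m$, the summand itself is not canonical either. Your route is different: you descend only the canonical top-degree subbundle (the image of the evaluation map), and then split it off over $k$ because $\mathrm{Ext}^1_C(\mathcal{V}/\mathcal{W},\mathcal{W})\otimes_k F=0$. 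This is exactly what produces a Galois-compatible splitting, so it supplies the step the paper takes for granted.

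Second, to bound the index of $A=\mathrm{End}(\mathcal{W})$, the paper passes to the generic point and uses Amitsur's theorem to put $[A]$ in $\{0,[C]\}$. That also identifies the quaternion algebra as the one attached to $C$. Your fallback is more elementary: $A$ is split by the quadratic field $F$, so its index divides $2$. This is all the proposition needs.

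Your main-line parity claim, that $[A]$ is $d[C]$ up to parity, is a genuine refinement. However, your $d=1$ cocycle sketch for it is only heuristic, so it is good that the proof does not rest on it. If you want the refinement, there is a cheaper argument. Suppose $d$ is odd and $A$ is split. A primitive idempotent then gives a line bundle on $C$ that becomes $\mathcal{O}(d)$ over $F$. Twisting by a suitable power of $\Omega_C$ gives a line bundle of degree $1$. Its space of global sections is $2$-dimensional by flat base change, so it has a nonzero section. That section vanishes at a single $k$-rational point, which contradicts anisotropy.
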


\begin{proof}
Let $F/k$ be a degree $2$ Galois extension splitting $C$, i.e.\ such that there exists an isomorphism $C_F\cong \mathbb{P}^1_F$. Then by the classical Grothendieck-Birkhoff theorem \cite{MR662762}, there is an isomorphism \[\mathcal{V}_F \cong \bigoplus_{m\in \mathbb{Z}} \mathcal{O}(m)^{\oplus n_m}\] for integers $n_m\geq 0$, which we explicitly allow to possibly be $0$ to simplify notation. Write $\mathcal{W}_m$ for the summand $\mathcal{O}(m)^{\oplus n_m}$ appearing in this decomposition.

Let $m$ be an integer such that $n_m\neq 0$, and set $\mathcal{W}=\mathcal{V}_F/\mathcal{W}_m$. The idempotent matrix below, \[\begin{bmatrix} 1_{\mathcal{W}_m} & 0 \\ 0 & 0 \end{bmatrix}\in \begin{bmatrix} \mathrm{End}(\mathcal{W}_m) & \mathcal{W}\otimes\mathcal{W}_m^\vee \\ \mathcal{W}_m\otimes \mathcal{W}^\vee & \mathrm{End}(\mathcal{W})\end{bmatrix}=\mathrm{End}(\mathcal{V}_F).\] is $\mathrm{Gal}(F/k)$-invariant. It therefore descends to an idempotent of $\mathrm{End}(\mathcal{V})$ that provides a decomposition \[\mathcal{V}=\mathcal{V}_m\oplus \mathcal{V}'\] for a vector bundle $\mathcal{V}_m$ such that $(\mathcal{V}_m)_F\cong \mathcal{W}_m$. By induction on the rank, we find such bundles for all $m\in \mathbb{Z}$.

Restricting to the generic point $\eta$ of $C$ yields an injection \[\mathrm{End}(\mathcal{V}_m)\otimes_k k(C)\rightarrow\mathrm{End}((\mathcal{V}_m)_\eta)\cong \mathrm{M}_{n_m}(k(C))\] that must be an isomorphism for dimension reasons. It follows from Amitsur's theorem \cite[Theorem 5.4.1]{MR3727161} that the class of $\mathrm{End}(\mathcal{V}_m)$ in $\mathrm{Br}(k)$ is in the subgroup generated by the class of the conic $C$. Since the class $[C]\in \mathrm{Br}(k)$ has index $2$, it follows that one of the following must be true:
\begin{itemize}
\item $\mathrm{End}(\mathcal{V}_m)$ is either split, i.e.\ $\mathrm{End}(\mathcal{V}_m)\cong M_{n_m}(k)$, 
\item or there is an isomorphism $\mathrm{End}(\mathcal{V}_m)\cong M_{r}(Q)$ for a quaternion algebra $Q$, corresponding to the conic $C$, and with $r=n_m/2$.
\end{itemize}
In the split case, the bundle $\mathcal{V}_m$ decomposes into a direct sum of $n_m$ bundle summands corresponding to the $n_m$ diagonal idempotents of $M_{n_m}(k)$ (i.e.\ those matrices with all but one diagonal entry equal to zero, and the last entry equal to $1$). In the quaternion case, the bundle $\mathcal{V}_m$ decomposes into $r=n_m/2$ summands corresponding to the $r$-many similarly defined diagonal idempotents again. This proves the claim.
\end{proof}

For an arbitrary Severi--Brauer variety $X$, of which a conic is an example, there is an isomorphism $\mathrm{Pic}(X)\cong \mathbb{Z}$ induced from extending scalars to an algebraic closure, cf.\ \cite[Section 2]{MR657430}. In the case of the real anisotropic conic (\ref{eq: anicon}), this isomorphism can be realized explicitly using the defining equations of the conic, which allows one to isolate explicit transition functions for the elements of this group.

In the lemma below we assume that $C\subset \mathbb{P}^2$ is the real anisotropic defined in (\ref{eq: anicon}). We set $\mathcal{U}=\{U_1,U_2\}$ with $U_1=D_+(z)$ and $U_2=D_+(y)$, as in Section \ref{sec: clutch}. 

\begin{Lm}\label{lem: 1x1}
We have $\mathrm{H}^1(\mathcal{U},\mathcal{O}_C^\times)=\mathbb{Z}$ generated by the class of the $1$-cocycle $-z/y$.
\end{Lm}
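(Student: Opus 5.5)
The plan is to compute the \v{C}ech group directly from the definition,
\[\mathrm{H}^1(\mathcal{U},\mathcal{O}_C^\times)=R_{12}^\times\big/\big(R_2^\times\cdot R_1^\times\big),\]
where a unit $f\in R_{12}^\times$ is a cocycle and $f\sim u_2 f u_1$ with $u_i\in R_i^\times$. Since $\mathrm{Pic}(R_i)=0$ by the earlier lemma, this group is $\mathrm{Pic}(C)$, but we will not need that identification for the computation itself.

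\textbf{Step 1: the units of each ring.} Base change to $\mathbb{C}$ and use the Galois description $R_i=(\mathbb{C}[s,1/s])^{\mathrm{Gal}(\mathbb{C}/\mathbb{R})}$. Units of $\mathbb{C}[s,1/s]$ are $\lambda s^n$, and $\tau(\lambda s^n)=\overline{\lambda}(-1)^n s^{-n}$. Invariance therefore forces $n=0$ and $\lambda\in\mathbb{R}^\times$, so $R_1^\times=R_2^\times=\mathbb{R}^\times$.

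For $R_{12}$, tensor with $\mathbb{C}$ to get $\mathbb{C}[s,1/s]$ localized further at the two geometric points of $C\cap V(y)$. In the coordinate $s=(x\otimes i-y\otimes 1)/z$, compute $y/z=-(s-s^{-1})/2$, so $y/z=-(s^2-1)/(2s)$, which vanishes at $s=\pm1$. Hence $R_{12}\otimes\mathbb{C}=\mathbb{C}[s,1/s,1/(s^2-1)]$. Its units are
\[\lambda\, s^a(s-1)^b(s+1)^c .\]
Since $\tau$ sends $s\mapsto -1/s$, it swaps the factors $s-1$ and $s+1$ up to powers of $s$ and constants. Invariant units should then be exactly $\mathbb{R}^\times\cdot (y/z)^{\mathbb{Z}}$, up to checking that no odd-power combination is invariant. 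I expect this check to be the main obstacle: one has to verify that $\tau$-invariance forces $b=c$ and $a=-b$, and then pin down the constant $\lambda$. This is a short computation analogous to the cocycle computation in the earlier lemma.

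\textbf{Step 2: assemble the group.} With Step 1, we get $R_{12}^\times/(\mathbb{R}^\times)=\{(y/z)^n\}\cong\mathbb{Z}$, freely generated by $y/z$, equivalently by $-z/y$ up to sign and inversion. The element $y/z$ is a genuine non-torsion unit, since it has a zero along $V(y)$. Its inverse differs from $-z/y$ only by the constant $-1\in R_1^\times$. So the class of $-z/y$ generates the group.

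As a consistency check, $-z/y$ is the rational function $l_1/l_2$ mentioned earlier and has degree $\pm2$ over $\mathbb{C}$. This matches $\mathrm{Pic}(C)\to\mathrm{Pic}(C_\mathbb{C})=\mathbb{Z}$ having image $2\mathbb{Z}$ for an anisotropic conic.
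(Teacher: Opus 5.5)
Your proposal is correct and takes the same route as the paper: identify the \v{C}ech group with $R_{12}^\times/(R_1^\times R_2^\times)$ and compute the unit groups, except that you justify $R_1^\times=R_2^\times=\mathbb{R}^\times$ and $R_{12}^\times=\mathbb{R}^\times (y/z)^{\mathbb{Z}}$ by Galois descent, whereas the paper only asserts them. The invariance check you defer does go through: $\tau\big(\lambda s^a(s-1)^b(s+1)^c\big)=\overline{\lambda}(-1)^{a+b}s^{-a-b-c}(s+1)^b(s-1)^c$, so invariance forces $b=c$, $a=-b$ and $\lambda\in\mathbb{R}^\times$, which leaves exactly the units $\lambda\big((s^2-1)/s\big)^b=\lambda(-2y/z)^b$.
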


\begin{proof}
The $\check{\text{C}}$ech cohomology group $\mathrm{H}^1(\mathcal{U},\mathcal{O}_C^\times)$ is computed as the quotient group of the units on the overlap $U_1\cap U_2$ by the product of the units on the open sets $U_1,U_2$:
$$\mathrm{H}^1(\mathcal{U},\mathcal{O}_C^\times) \cong \frac{\mathcal{O}_C(U_1 \cap U_2)^\times}{\mathcal{O}_C(U_1)^\times \cdot \mathcal{O}_C(U_2)^\times} = \frac{R_{12}^\times}{R_1^\times R_2^\times}.$$

We have the following unit groups:
\begin{itemize}
    \item $R_1^\times = R_2^\times = \mathbb{R}^\times$ (the only invertible functions on the opens $U_i$ are constants),
    \item and $R_{12}^\times = \coprod_{i \in \mathbb{Z}} \mathbb{R}^\times \left(\frac{z}{y}\right)^i$.
\end{itemize}
Thus, the product of the unit groups in the denominator is $R_1^\times R_2^\times = \mathbb{R}^\times \cdot \mathbb{R}^\times = \mathbb{R}^\times$.

The quotient group is thus:
$$\mathrm{H}^1(\mathcal{U},\mathcal{O}_C^\times) \cong \frac{\coprod_{i \in \mathbb{Z}} \mathbb{R}^\times \left(\frac{z}{y}\right)^i}{\mathbb{R}^\times} \cong \mathbb{Z}.$$
Here the class of $a(z/y)^n$, for $a\in \mathbb{R}^\times$, is sent to the integer $n$.
\end{proof}

\begin{Rem}\label{rem: can}
Let $\Omega_C$ be the sheaf of K\"ahler differentials on $C$. 
Let $\Omega_1$ and $\Omega_2$ be its restriction to $U_1,U_2$. 
Then $\Omega_1$ is a free module generated by \[\omega_z=(y/z)d(x/z)-(x/z)d(y/z)\] and $\Omega_2$ is a free module generated by \[\omega_y=(z/y)d(x/y)-(x/y)d(z/y).\] 
There are relations:
\[d(x/y)=-(z^3/y^3)d(x/z)\quad\mbox{and}\quad (z/y)d(y/z)=-(y/z)d(z/y)\] on $U_1\cap U_2$. 
One can check that these give \[\omega_y=-(z/y)\omega_z\] so that $\Omega_C$ is also described by the transition function $-y/z$ on $U_{12}$ (from $U_1$ to $U_2$).
\end{Rem}

There is a canonical indecomposable bundle of rank two on any anisotropic conic $C$, over an arbitrary field $k$, called the \textit{Quillen bundle}. To see this, note that \[\dim \mathrm{Ext}^1_C(\mathcal{O}_C,\Omega_C)=\dim \mathrm{H}^1(C,\Omega_C)=1,\] which can be checked over an algebraic closure $\bar{k}$ of $k$. Any nonzero element $\eta$ of $\mathrm{H}^1(C,\Omega_C)\cong \mathrm{Ext}^1_C(\mathcal{O}_C,\Omega_C)$ gives rise to an exact sequence (unique up to equivalence) \begin{equation}\tag{ES}\label{eq: es}0\rightarrow \Omega_C\rightarrow \mathcal{Q}\rightarrow \mathcal{O}_C\rightarrow 0\end{equation} which is, after scalar extension from $k$ to $\bar{k}$, equivalent to the Euler sequence on $\mathbb{P}^1_{\bar{k}}$. Note also that we have $\delta(1)=\eta$ under the connecting map $\delta: \mathrm{H}^0(C,\mathcal{O}_C)\rightarrow \mathrm{H}^1(C,\Omega_C)$ coming from the long exact sequence associated to (\ref{eq: es}), so that $\dim \mathrm{H}^0(C,\mathcal{Q})=0$.

Now if $\mathcal{Q}$ decomposed as a sum $\mathcal{Q}\cong \mathcal{L}\oplus \mathcal{L}'$ of line bundles $\mathcal{L},\mathcal{L}'$ on $C$, then since $\Omega_C\cong \det(\mathcal{Q})\cong \mathcal{L}\otimes \mathcal{L}'$ we must have $\mathcal{L}\cong \Omega_C^{\otimes r}$ and $\mathcal{L}'\cong \Omega_C^{\otimes 1-r}$ for some $r\in \mathbb{Z}$. Counting dimensions, however, one can see that if this were the case then we would necessarily have $\dim \mathrm{H}^0(C,\mathcal{Q})>0$, which contradicts the above paragraph.

The following lemma provides the $1$-cocycle that describes the Quillen bundle.

\begin{Lm}\label{lem: 2x2}
There exists an indecomposable bundle of rank $2$ on the conic $C$ of (\ref{eq: anicon}). This bundle is determined by the $1$-cocycle 
\[M=\begin{bmatrix} -y/z  & x/z  \\ 0  & 1 \end{bmatrix}\in \mathrm{GL}_2(R_{12})\]
considered in $\mathrm{H}^1(\mathcal{U},\mathrm{GL}_2(\mathcal{O}_C))=\mathrm{GL}_2(R_2)\backslash \mathrm{GL}_2(R_{12})/\mathrm{GL}_2(R_1)$.
\end{Lm}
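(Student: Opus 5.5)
The plan is to show that $M$ is the transition matrix of an extension of $\mathcal{O}_C$ by $\Omega_C$ whose extension class is nonzero. The preceding discussion then identifies the bundle with the Quillen bundle $\mathcal{Q}$ and shows that it is indecomposable.

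First, $M\in \mathrm{GL}_2(R_{12})$, since $\det M=-y/z$ is a unit on $U_1\cap U_2$. Because $M$ is upper triangular, its first column gives an inclusion of the line bundle with transition function $-y/z$. By Remark \ref{rem: can} that line bundle is $\Omega_C$. The quotient has transition function $1$, so it is $\mathcal{O}_C$. Thus the bundle $\mathcal{V}_M$ defined by $M$ sits in an exact sequence $0\to\Omega_C\to\mathcal{V}_M\to\mathcal{O}_C\to 0$. Its extension class in $\mathrm{Ext}^1_C(\mathcal{O}_C,\Omega_C)=\mathrm{H}^1(C,\Omega_C)=\check{\mathrm{H}}^1(\mathcal{U},\Omega_C)$ is the \v{C}ech class of the off-diagonal entry $x/z$, viewed as a section of $\Omega_C$ over $U_1\cap U_2$ in the trivialization on $U_1$, i.e.\ the section $(x/z)\omega_z$. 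The \v{C}ech computation is valid because $U_1,U_2$ are affine.

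The key step, and the main obstacle, is to show that this class is nonzero. Concretely, $(x/z)\omega_z$ must not be of the form $a\omega_z+b\omega_y$ with $a\in R_1$ and $b\in R_2$. Using $\omega_y=-(z/y)\omega_z$, this means that $x/z\neq a-(z/y)b$ for all $a\in R_1$, $b\in R_2$.

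The first approach I would try is base change to $\mathbb{C}$, where $R_1\otimes\mathbb{C}=\mathbb{C}[s,s^{-1}]$ and $R_{12}\otimes\mathbb{C}$ is this ring with the two points of $V(y)$ also inverted. A residue computation would then finish the argument. The sum of residues of a meromorphic differential on $\mathbb{P}^1_{\mathbb{C}}$ is zero, so any coboundary $a\omega_z-b(z/y)\omega_z$ has total residue $0$ at the points of $V(z)$. One should instead pair against the trace/residue map $\mathrm{H}^1(C,\Omega_C)\to k$, which sends the \v{C}ech class $\alpha\in\Omega(U_{12})$ to the sum of the residues of $\alpha$ at the points of $V(z)$ (equivalently, minus the sum at the points of $V(y)$). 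This map is well defined precisely because sections over $U_1$ have no poles there, and sections over $U_2$ have total residue zero there by the residue theorem.

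So the computation reduces to checking that $(x/z)\omega_z$ has nonzero total residue at the two points $z=0$. In the coordinate $s$ I would write $x/z=(s-s^{-1})/(2i)$ and $y/z=-(s+s^{-1})/2$, and express $\omega_z$ as a multiple of $ds/s$. The two points of $V(z)$ are where $x/z$ and $y/z$ blow up, namely $s=0,\infty$. One expects $\omega_z=c\,ds/s$ for a constant $c$, so the residue sum of $(x/z)c\,ds/s$ at $0,\infty$ picks up the $s^{\pm1}$ terms and should come out as a nonzero constant; a wrong guess about $\omega_z$ here would just change this constant. Over $\mathbb{R}$ this is a small linear-algebra check, and it is the step I expect to need the most care with signs and normalizations.

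Finally, a nonsplit extension of $\mathcal{O}_C$ by $\Omega_C$ is $\mathcal{Q}$, and by the argument preceding the lemma (where $\dim\mathrm{H}^0(C,\mathcal{Q})=0$) it is indecomposable.
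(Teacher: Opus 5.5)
\textbf{There is a genuine gap: you read the off-diagonal entry in the wrong frame, and the class you then try to show is nonzero is actually zero.}

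Your first paragraph is correct only with the convention $v_2=Mv_1$, where $v_i$ are coordinates over $U_i$. Under that convention the first column is a line subbundle with transition $-y/z$ from $U_1$ to $U_2$, which is $\Omega_C$ by Remark~\ref{rem: can}. Lift $1$ by the basis vector $e_2$ on both charts. The discrepancy is $Me_2-e_2=(x/z)e_1$, and it is expressed in the $U_2$-frame. So the extension class is $(x/z)\omega_y=-(x/y)\omega_z$; the paper obtains $-(x/z)\omega_y$ with its own splitting.

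The section $(x/z)\omega_z$ that you use lies in $\Omega_C(U_1)=R_1\omega_z$, so it is a coboundary. Your criterion ``$x/z\neq a-(z/y)b$ for all $a\in R_1$, $b\in R_2$'' is false: take $a=x/z$, $b=0$. Reading $x/z$ in the $U_1$-frame is right only for the opposite convention $v_1=Mv_2$. But then the subbundle has transition $-z/y$, so it is $\Omega_C^{\vee}$, and the extension does split.

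Your planned residue computation would confirm this rather than rescue it. For a Laurent polynomial $f(s)$, the form $f\,ds/s$ has residue $f_0$ (the constant term, not the $s^{\pm1}$ terms) at $s=0$ and $-f_0$ at $s=\infty$. So every element of $R_1\omega_z$ has total residue $0$ on $C\cap V(z)$, which is exactly why the trace is well defined.

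There are two smaller slips:
\begin{itemize}
\item With $s=i(x/z)-(y/z)$ one has $x/z=(s+s^{-1})/(2i)$ and $y/z=(s^{-1}-s)/2$. Your formulas give $(x/z)^2+(y/z)^2=1$ instead of $-1$.
\item In your well-definedness sentence, $U_1$ and $U_2$ are swapped. $C\cap V(z)$ is the complement of $U_1$. So sections over $U_2$ are regular there, while sections over $U_1$ have their only poles there and hence total residue zero.
\end{itemize}

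Once the class is corrected, your route works and is a genuine alternative to the paper's. One finds $\omega_z=-i\,ds/s$ and $x/y=(1+s^2)/(i(1-s^2))$, so $-(x/y)\omega_z=\frac{1+s^2}{1-s^2}\,\frac{ds}{s}$. This form has residue $1$ at $s=0$ and at $s=\infty$, so the trace is $2\neq 0$ and the extension is nonsplit.

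The paper instead argues with valuations alone. From $-x/z=f_2+f_1(y/z)$ it shows that $f_1$ is regular at $P_y$, hence a real constant $c$. Then $f_2=-(x+cy)/z$ has a pole at $P_y$, contradicting $f_2\in R_2$.

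The residue argument is shorter and explains why the class is nonzero, since the Serre-duality trace detects it. The paper's argument needs neither residues nor the trace map. Your use of the first column as the subbundle, rather than the paper's splitting, is fine.
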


\begin{proof}
By the comments above the statement of Lemma \ref{lem: 2x2}, it suffices to produce a non-split short exact sequence such as (\ref{eq: es}). Define maps \[\phi_2:R_2\xrightarrow{\begin{bmatrix}\frac{x}{y} \\ \frac{z}{y}\end{bmatrix}} R_2\oplus R_2 \quad \mbox{and}\quad \psi_2:R_2\oplus R_2\xrightarrow{\begin{bmatrix}-\frac{z}{y} & \frac{x}{y}\end{bmatrix}} R_2\] and also \[ \phi_1:R_1\xrightarrow{\begin{bmatrix} 0 \\ -1\end{bmatrix}} R_1\oplus R_1 \quad \mbox{and} \quad \psi_1:R_1\oplus R_1\xrightarrow{\begin{bmatrix}1 & 0\end{bmatrix}} R_1.\] Letting $R_i'$ represent the restriction of $R_i$ to $R_{12}$, we have a commutative diagram with exact rows \[
\begin{tikzcd}
0\arrow{r} & R_2'\arrow["\phi_2"]{r} & R_2'\oplus R_2' \arrow["\psi_2"]{r} & R_2' \arrow{r}\arrow[equals]{d} & 0\\
0\arrow{r} & R_1'\arrow["\phi_1"]{r}\arrow["-\frac{y}{z}"]{u} & R_1'\oplus R_1' \arrow["\psi_1"]{r}\arrow["M"]{u} & R_1' \arrow{r} & 0.
\end{tikzcd}\]
Let $V$ be the vector bundle associated to $M$. Then we get an exact sequence \[0\rightarrow \Omega_C\rightarrow V \rightarrow \mathcal{O}_C\rightarrow 0\] from this commutative diagram. We want to show that this sequence is not split.

To do this, we use the connecting morphism \[\mathcal{O}_C(C)\xrightarrow{\delta} \mathrm{H}^1(\mathcal{U},\Omega_C).\] Note that we have $\psi_2(z/y,-x/y)=1$ and $\psi_1(1,0)=1$. Since $-x^2/(yz)=z/y+y/z$, we have that $\phi_2(-x/z)=(z/y+y/z,-x/y)$. It follows that we have that $\delta(1)=-(x/z)\omega_y$ in $\mathrm{H}^1(\mathcal{U},\Omega_C)$ considered as a quotient of $\Omega_C(U_{12})$. 

We want to show that the $1$-cocycle $\delta(1) = -(x/z)\omega_y$ is non-trivial in $\mathrm{H}^1(\mathcal{U},\Omega_C)$. Assume, for a contradiction, that $\delta(1)$ is trivial or, equivalently, that $-(x/z)\omega_y$ is a $1$-coboundary; i.e.\ assume that there exist sections $h_1 \in \Omega_C(U_1)$ and $h_2 \in \Omega_C(U_2)$ such that $-(x/z)\omega_z = h_2 - h_1$ on $U_{12}$.

Since $\Omega_C(U_1) = R_1 \omega_z$ and $\Omega_C(U_2) = R_2 \omega_y$, we have $h_1=f_1\omega_z$ and $h_2=f_2\omega_y$, where $f_1 \in R_1$ and $f_2 \in R_2$. Using the transition relation $\omega_y=-(z/y)\omega_z$ (from Remark \ref{rem: can}), the coboundary condition becomes:
$$-\frac{x}{z}\omega_y = f_2\omega_y - f_1\left(-\frac{y}{z}\omega_y\right) \quad \text{on } U_{12}.$$
Equating the coefficients in $R_{12}$ yields the required decomposition:
\begin{equation}\label{eq: f1} -\frac{x}{z} = f_2 + f_1\left(\frac{y}{z}\right), \quad \text{with } f_1 \in R_1, f_2 \in R_2.\end{equation}

Let $P_y = C \cap V(z)$ and $P_z = C \cap V(y)$ so that $R_1$ consists of rational functions regular outside of $P_y$ and $R_2$ consists of functions regular outside of $P_z$. Note that $P_y\cap P_z=\emptyset$.
We can analyze the regularity of the function $f_1 = -\frac{x}{y} - f_2\left(\frac{z}{y}\right)$ on $P_y$ (e.g.\ by using the valuation on the function field $\mathbb{R}(C)$ determined by $z$).
\begin{itemize}
    \item The function $\frac{x}{y}$ is regular on $P_y$ (since $y \neq 0$ when $z=0$ on the conic).
    \item The function $v=\frac{z}{y}$ has a simple zero at $P_y$. Since $f_2 \in R_2$, it is regular at $P_y$. Thus, the product $f_2(z/y)$ is also regular at $P_y$.
\end{itemize}
Since both terms are regular at $P_y$, we find that $f_1$ must be regular at $P_y$. Since $f_1 \in R_1$ is regular everywhere except possibly at $P_y$, and we showed it is also regular at $P_y$, it follows that $f_1$ must be a regular function on the entire projective curve $C$. But, since $C$ is a smooth projective curve over $\mathbb{R}$, we have $\mathcal{O}_C(C) = \mathbb{R}$, so $f_1 = c \in \mathbb{R}$.

Substituting $f_1=c$ back into the decomposition (\ref{eq: f1}), we find for $f_2$:
$$f_2 = -\frac{x}{z} - c\frac{y}{z}.$$
Since $f_2$ must be in $R_2$, it must be regular everywhere except possibly at $P_z$. However, the function $f_2 = -\frac{x}{z} - c\frac{y}{z}$ has a pole at $P_y$ (where $z=0$). Since $P_y \cap P_z=\emptyset$, we find $f_2$ is not regular at $P_y$. This contradicts the requirement that $f_2 \in R_2$.
Therefore, $\delta(1)\neq 0$, and the sequence is non-split. It follows that the bundle $V$ constructed as above is isomorphic to the Quillen bundle $\mathcal{Q}$ on the conic $C$.
\end{proof}

Finally, we have:

\begin{Thm}\label{thm: cantypes}
Let $C=V(x^2+y^2+z^2)\subset \mathbb{P}^2$ be the real anisotropic conic. Let $\mathcal{V}$ be an arbitrary vector bundle on $C$. Then $\mathcal{V}$ is a direct sum of bundles of the form \[\Omega_C^{\otimes r} \quad \mbox{and}\quad \mathcal{Q}\otimes\Omega_C^{\otimes s}\] where $\mathcal{Q}$ is the rank 2 indecomposable Quillen bundle given in Lemma \ref{lem: 2x2}.

In particular, any vector bundle $\mathcal{V}$ is described by a canonical transition matrix $M(\mathcal{V})$ on $U_1\cap U_2$ as a block sum $M(\mathcal{V})=\bigoplus_{i\in \mathbb{Z}} M_i^{\oplus m_i}$, for uniquely determined integers $m_i\geq 0$, where \[M_i= (-y/z)^{i/2} \quad \mbox{or}\quad M_i=\begin{bmatrix} (-y/z)^{(i+1)/2}  & (x/z)(-y/z)^{(i-1)/2}  \\ 0  & (-y/z)^{(i-1)/2} \end{bmatrix}\] depending if $i\equiv 0 \pmod{2}$ or $i\equiv 1 \pmod{2}$ respectively.
\end{Thm}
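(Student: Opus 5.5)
The proof has three parts: build on Proposition~\ref{prop: decomp1or2}, identify the indecomposable summands explicitly, and then read off the canonical transition matrices and their uniqueness.

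\emph{Identifying the summands.} By Proposition~\ref{prop: decomp1or2}, $\mathcal{V}$ is a direct sum of indecomposable bundles of rank $1$ or $2$. Lemma~\ref{lem: 1x1} and Remark~\ref{rem: can} show that $\mathrm{Pic}(C)=\mathrm{H}^1(\mathcal{U},\mathcal{O}_C^\times)\cong\mathbb{Z}$ is generated by $\Omega_C$, with transition function $-y/z$. Hence every line bundle is $\Omega_C^{\otimes r}$, described by $(-y/z)^r$; this is $M_i$ for $i=2r$.

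\emph{Rank $2$ indecomposables.} Let $\mathcal{E}$ be such a summand. In the proof of Proposition~\ref{prop: decomp1or2} it occurs inside some $\mathcal{V}_m$ with $(\mathcal{V}_m)_{\mathbb{C}}\cong\mathcal{O}(m)^{\oplus n_m}$, so $\mathcal{E}_{\mathbb{C}}\cong\mathcal{O}(m)^{\oplus 2}$. Since $\Omega_C$ pulls back to $\mathcal{O}(-2)$, $m$ must be odd: if $m=-2s$, then $\mathcal{E}\otimes\Omega_C^{\otimes -s}$ would be a rank two bundle, trivial over $\mathbb{C}$, whose endomorphism algebra is an $\mathbb{R}$-form of $M_2(\mathbb{C})$. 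By Amitsur's theorem that algebra is $M_2(\mathbb{R})$ or $\mathbb{H}$.
\begin{itemize}
\item If it is $M_2(\mathbb{R})$, it has nontrivial idempotents and $\mathcal{E}$ decomposes.
\item If it is $\mathbb{H}$, the bundle is a descent of $\mathcal{O}^{2}$ with $\mathrm{H}^0=\mathbb{R}^{\otimes}$-form of $\mathbb{C}^2$. Its global sections then give a trivialization, so $\mathrm{End}=M_2(\mathbb{R})$, a contradiction.
\end{itemize}
So write $m=1-2s'$. Then $\mathcal{E}'=\mathcal{E}\otimes\Omega_C^{\otimes -s}$, for the appropriate $s$, satisfies $\mathcal{E}'_{\mathbb{C}}\cong\mathcal{O}(-1)^{\oplus2}$, which is also what $\mathcal{Q}_{\mathbb{C}}$ is: the middle term of the Euler sequence twisted down.

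\emph{$\mathcal{E}'\cong\mathcal{Q}$.} This step I expect to be the main obstacle. The plan is to compute $\mathrm{Hom}(\mathcal{Q},\mathcal{E}')$. Over $\mathbb{C}$ it is $M_2(\mathbb{C})$, an $8$-dimensional real vector space. Taking Galois invariants gives a $4$-dimensional real space $H$. Composition makes $H$ a right module over $\mathrm{End}(\mathcal{Q})$. The idempotent argument shows that $\mathrm{End}(\mathcal{Q})$ is not split, since $\mathcal{Q}$ is indecomposable, so it is $\mathbb{H}$. Then $H$ is a nonzero $\mathbb{H}$-vector space of real dimension $4$, hence free of rank one. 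A generator is a morphism $\mathcal{Q}\to\mathcal{E}'$, and I must show it is an isomorphism. Its determinant lies in $\mathrm{Hom}(\det\mathcal{Q},\det\mathcal{E}')=\mathrm{H}^0(\mathcal{O})=\mathbb{R}$, and this gives a reduced-norm-type map $H\to\mathbb{R}$. This map is anisotropic because $\mathbb{H}$ is a division algebra and $H\cong\mathbb{H}$, so a nonzero element has nonzero determinant and is an isomorphism.

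Twisting back gives $\mathcal{E}\cong\mathcal{Q}\otimes\Omega_C^{\otimes s}$. Its transition matrix is $(-y/z)^s$ times the matrix $M$ of Lemma~\ref{lem: 2x2}, which is exactly the displayed $M_i$ with $i=2s+1$.

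\emph{Uniqueness.} Extend scalars to $\mathbb{C}$. A summand $\Omega_C^{\otimes r}$ contributes $\mathcal{O}(-2r)$, and a summand $\mathcal{Q}\otimes\Omega_C^{\otimes s}$ contributes $\mathcal{O}(-2s-1)^{\oplus2}$. By the uniqueness in Grothendieck--Birkhoff, the multiplicities $n_m$ determine all the $m_i$: $m_{2r}=n_{-2r}$ and $m_{2s+1}=n_{-2s-1}/2$.
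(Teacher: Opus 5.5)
There is a genuine gap in the step you flagged yourself, the identification $\mathcal{E}'\cong\mathcal{Q}$. Let $f_0$ be a generator of $H$ over $\mathrm{End}(\mathcal{Q})\cong\mathbb{H}$. Every $f\in H$ is $f_0\circ g$ for some $g\in\mathbb{H}$, and multiplicativity of determinants gives $\det(f_0\circ g)=\det(f_0)\cdot\mathrm{Nrd}(g)$. So the determinant on $H$ is anisotropic if $\det(f_0)\neq 0$, and identically zero if $\det(f_0)=0$. The fact that $\mathbb{H}$ is a division algebra does not decide which case occurs. Deciding it is exactly the claim that $f_0$ is an isomorphism. Your sentence ``this map is anisotropic because $\mathbb{H}$ is a division algebra and $H\cong\mathbb{H}$'' therefore assumes what it is meant to prove.

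The missing ingredient is an input from $\mathbb{C}$ showing that the determinant does not vanish identically on $H$. You can supply it in either of two ways.
\begin{itemize}
\item Use the paper's argument. The determinant is a quadratic form on $H$ whose extension of scalars is the determinant on $H\otimes_{\mathbb{R}}\mathbb{C}\cong\mathrm{Hom}(\mathcal{Q}_{\mathbb{C}},\mathcal{E}'_{\mathbb{C}})\cong M_2(\mathbb{C})$, which is not the zero form. Equivalently, the isomorphisms form a Zariski open subset of the affine space $\mathrm{Hom}(\mathcal{F},\mathcal{E})$ that is nonempty because it has a $\mathbb{C}$-point, so it has an $\mathbb{R}$-point because $\mathbb{R}$ is infinite. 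This route makes the $\mathbb{H}$-module structure unnecessary.
\item Argue directly. If some $0\neq f\in H$ had $\det f=0$, then $f_{\mathbb{C}}$ would be a rank-one constant matrix on $\mathcal{O}(-1)^{\oplus 2}$. Then $\mathrm{im}(f)$ would be a line bundle on $C$ with $\mathrm{im}(f)_{\mathbb{C}}\cong\mathcal{O}(-1)$. This is impossible, since every line bundle on $C$ is a power of $\Omega_C$ and so has even degree over $\mathbb{C}$.
\end{itemize}
With either patch your argument goes through, and it even shows that every nonzero element of $H$ is an isomorphism.

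The rest of your outline is fine: the line bundles, the identification of $(-y/z)^s$ times the matrix of Lemma \ref{lem: 2x2} with $M_{2s+1}$, and uniqueness via the Grothendieck--Birkhoff multiplicities. Your even case also works once the garbled sentence is replaced by the following: the evaluation map $\mathrm{H}^0(C,\mathcal{F})\otimes_{\mathbb{R}}\mathcal{O}_C\to\mathcal{F}$ is an isomorphism because it becomes one after the faithfully flat base change to $\mathbb{C}$. That gives $\mathcal{F}\cong\mathcal{O}_C^{\oplus 2}$ directly, without the $M_2(\mathbb{R})$ versus $\mathbb{H}$ case split. It is a valid alternative to the paper's density argument for this case.
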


\begin{proof}
By Proposition \ref{prop: decomp1or2}, we know that $\mathcal{V}$ decomposes as a sum line bundles and vector bundles of rank $2$. Since every line bundle is a twist of $\Omega_C$ by Lemma \ref{lem: 1x1} and Remark \ref{rem: can}, it suffices to classify all rank $2$ bundles on $C$.

So let $\mathcal{F}$ be an arbitrary rank $2$ bundle on $C$. Base changing to $\mathbb{C}/\mathbb{R}$, we get a bundle $\mathcal{F}_{\mathbb{C}}$ on $\mathbb{P}^1_{\mathbb{C}}$ which must decompose as a sum $\mathcal{F}_{\mathbb{C}}\cong \mathcal{O}(m)\oplus \mathcal{O}(n)$ for integers $n,m\in \mathbb{Z}$. We note that if $m\neq n$, then there are canonical idempotents of $\mathrm{End}(\mathcal{F}_{\mathbb{C}})$ which descend to idempotents of $\mathrm{End}(\mathcal{F})$, giving a decomposition of $\mathcal{F}$ into line bundles on $C$.

So assume $\mathcal{F}_\mathbb{C} \cong \mathcal{O}(n)\oplus \mathcal{O}(n)$. Since $(\Omega_C)_{\mathbb{C}}\cong \mathcal{O}(-2)$, we can twist $\mathcal{F}$ by $\Omega_C^{\otimes s}$ for some $s\in \mathbb{Z}$ to assume that either $n=0$ or $n=-1$. Now let $\mathcal{E}$ be either the sum $\mathcal{O}_C\oplus \mathcal{O}_C$, if $n=0$, or the rank two bundle from Lemma \ref{lem: 2x2}, if $n=-1$. Note that, if $n=0$, then $\mathrm{End}(\mathcal{E})\cong M_2(\mathbb{R})$ and, if $n=-1$, then $\mathrm{End}(\mathcal{E})\cong Q$ is isomorphic to the real quaternions (cf.\ the proof of Proposition \ref{prop: decomp1or2} and the claim of Lemma \ref{lem: 2x2}).

In either case, $\mathrm{Hom}(\mathcal{F},\mathcal{E})$ is a left $\mathrm{End}(\mathcal{E})$-module of real dimension $4$, which can be checked after extension along $\mathbb{C}/\mathbb{R}$. There is a Zariski open subset of the affine space $\mathrm{Hom}(\mathcal{F},\mathcal{E})$ consisting of those maps which are isomorphisms; moreover, this subset is nonempty since it has a point over $\mathbb{C}$. But, over an infinite field like $\mathbb{R}$, every nonempty open subset of an affine space has a rational point.
\end{proof}

\section{An Algorithm for Finding Canonical Forms}
In this section we outline an algorithm for determining the block-diagonal canonical form, as specified in Theorem \ref{thm: cantypes}, of an arbitrary invertible matrix $M\in\mathrm{GL}_n(R_{12})$ with $n\geq 1$, where as before we write \[R_{12}=\mathbb{R}\left[\frac{x}{y},\frac{x}{z}, \frac{z}{y},\frac{y}{z}\right]\bigg/\left(\left(\frac{x}{y}\right)^2+\left(\frac{z}{y}\right)^2+1\right).\] Our algorithm finds the canonical form of $M$ \textit{without} also finding the ``multipliers", i.e.\ without also determining matrices $L\in \mathrm{GL}_n(R_2)$ and $R\in \mathrm{GL}_n(R_1)$ such that $LMR$ is the associated canonical form. We show how, however, that if the multipliers $L$ and $R$ are needed, then one can efficiently find them given $M$ and its canonical form.

\begin{Rem}
Although we work only with real coefficients throughout this section, our algorithms will work replacing the base field by any infinite field $k$, if suitably adjusted. In this case the quadric $C=V(x^2+y^2+z^2)\subset \mathbb{P}^2_k$ may or may not be anisotropic. If $C$ is anisotropic, e.g.\ if $k$ is a totally real number field, then the statement and classification of canonical forms as given in Theorem \ref{thm: cantypes} also still holds, and our algorithm below will apply with the appropriate modifications.
\end{Rem}

We use $u,v$ for the standard coordinate functions of $\mathbb{P}^1_\mathbb{C}$ and $x,y,z$ for the standard coordinate functions of $\mathbb{P}^2_\mathbb{R}$. Over $\mathbb{C}$, the morphism \[\phi:\mathbb{P}^1_\mathbb{C}\rightarrow \mathbb{P}^2_\mathbb{C}\quad \quad [u:v]\mapsto [i(u^2-2uv+2v^2): 2v(u-v): u(2v-u)]\] defines an isomorphism between $\mathbb{P}^1_{\mathbb{C}}$ and $C_{\mathbb{C}}$, the extension of the conic of (\ref{eq: anicon}) to $\mathbb{C}$. It follows from these formulae that \[\phi(\mathbb{P}^1_{\mathbb{C}})\cap V(z) = \phi(V(u(2v-u)))\quad \mbox{and} \quad \phi(\mathbb{P}^1_{\mathbb{C}})\cap V(y) = \phi(V(2v(u-v))).\] In particular, using the rational coordinate $w=u/v$ on $\mathbb{P}^1_{\mathbb{C}}$, the open subset $U_{1,\mathbb{C}}$ from Section \ref{sec: clutch} is identified with $\phi(\mathbb{P}^1_\mathbb{C}\setminus \{0, 2\})$ and $U_{2,\mathbb{C}}$ is identified with $\phi(\mathbb{P}^1_\mathbb{C}\setminus \{\infty, 1\})$.

\begin{Lm}\label{lem: refine}
Let $\mathcal{W}=\{W_1, W_2\}$ be the open cover of $\mathbb{P}^1_{\mathbb{C}}$ where $W_1=\mathbb{P}^1_{\mathbb{C}}\setminus \{0\}$ and $W_2=\mathbb{P}^1_{\mathbb{C}}\setminus\{\infty\}$. Then for any $n\geq 1$ the map \[\mathrm{H}^1(\mathcal{W},\mathrm{GL}_n(\mathcal{O}_{\mathbb{P}^1_{\mathbb{C}}}))\rightarrow \mathrm{H}^1(\mathcal{U},\mathrm{GL}_n(\mathcal{O}_{\mathbb{P}^1_{\mathbb{C}}})),\] induced by the canonical refinement $\mathcal{U}=\{U_{1,\mathbb{C}}, U_{2,\mathbb{C}}\}$ of $\mathcal{W}$, is a bijection.
\end{Lm}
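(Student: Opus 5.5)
Both sides classify isomorphism classes of rank $n$ vector bundles on $\mathbb{P}^1_{\mathbb{C}}$ that are trivialized on the respective covers, and the refinement map is compatible with sending a \v{C}ech cocycle to its associated bundle. So the plan is to compare each \v{C}ech set with the set of isomorphism classes of rank $n$ bundles on $\mathbb{P}^1_{\mathbb{C}}$. The refinement map is well defined once we fix the refinement data. Here $U_{1,\mathbb{C}}=\mathbb{P}^1_{\mathbb{C}}\setminus\{0,2\}\subset W_1$ and $U_{2,\mathbb{C}}=\mathbb{P}^1_{\mathbb{C}}\setminus\{\infty,1\}\subset W_2$. The induced map sends a class $[A]$, with $A\in\mathrm{GL}_n(\mathcal{O}(W_1\cap W_2))$, to the class of the restriction of $A$ to $U_{1,\mathbb{C}}\cap U_{2,\mathbb{C}}$. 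The argument rests on the general fact that for any open cover $\mathcal{V}$ of a scheme $X$, $\mathrm{H}^1(\mathcal{V},\mathrm{GL}_n(\mathcal{O}_X))$ injects into the set of isomorphism classes of rank $n$ bundles on $X$. Its image is exactly the set of bundles trivial on every member of $\mathcal{V}$, and these identifications commute with refinement maps.

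The first step is injectivity, which follows formally from this identification. The composite $\mathrm{H}^1(\mathcal{W},-)\to\mathrm{H}^1(\mathcal{U},-)\to\{\text{bundles}\}/\cong$ equals the injection $\mathrm{H}^1(\mathcal{W},-)\to\{\text{bundles}\}/\cong$. If a reader prefers a matrix-level argument: suppose $V_2AV_1=B$ with $V_i\in\mathrm{GL}_n(\mathcal{O}(U_{i,\mathbb{C}}))$. We then glue $V_1$ with $A^{-1}B$-type data to see that $V_1$ in fact extends across the points $0$ and $2$ to a section over $W_1$, and similarly for $V_2$. The identification with bundles avoids this bookkeeping, so I would present the bundle argument.

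The second step is surjectivity. Any rank $n$ bundle $\mathcal{E}$ on $\mathbb{P}^1_{\mathbb{C}}$ restricts to a free module on $W_1\cong\mathbb{A}^1$ and on $W_2\cong\mathbb{A}^1$, since $\mathbb{C}[t]$ is a PID. Equally, by Grothendieck--Birkhoff, $\mathcal{E}\cong\bigoplus\mathcal{O}(a_i)$ with transition matrix $\mathrm{diag}(w^{a_i})$ on $W_1\cap W_2$. Hence every bundle, in particular every bundle trivial on $\mathcal{U}$, is represented by a $\mathcal{W}$-cocycle. Concretely, given $M\in\mathrm{GL}_n(\mathcal{O}(U_{1,\mathbb{C}}\cap U_{2,\mathbb{C}}))$, the bundle it defines is isomorphic to one with diagonal cocycle $D=\mathrm{diag}(w^{a_i})$. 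The refinement of $D$ is then equivalent to $M$.

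The main obstacle is making the compatibility of the \v{C}ech--bundle dictionary with refinement precise, since the covers are not nested in the naive way (each $U_{i,\mathbb{C}}$ sits in $W_i$, but $\mathcal{W}$ is not a refinement of $\mathcal{U}$). I would handle this by checking directly that the bundle obtained by gluing trivial bundles along the restriction of $A$ on the smaller cover is canonically isomorphic to the one glued along $A$ on $\mathcal{W}$: the identity maps on the trivial pieces over $U_{i,\mathbb{C}}\subset W_i$ are compatible with the gluings.
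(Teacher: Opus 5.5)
Your proposal is correct and takes the same route as the paper: both identify each \v{C}ech set with isomorphism classes of rank $n$ bundles on $\mathbb{P}^1_{\mathbb{C}}$, and every bundle is trivial on $W_1,W_2\cong\mathbb{A}^1$, so the refinement map must be a bijection. One small slip in your optional matrix-level aside: $W_1\setminus U_{1,\mathbb{C}}=\{2\}$, so $V_1$ only needs to extend across $w=2$ (and $V_2$ across $w=1$), not across $0$.
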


\begin{proof}
The map is clearly injective. Since both sets are in canonical bijection with the set of rank $n$ vector bundles on $\mathbb{P}^1_{\mathbb{C}}$, the map is also surjective.
\end{proof}

Using the formulae \begin{equation}\label{eq: phi}\frac{x}{y} = \frac{i(w^2-2w+2)}{2(w-1)}\quad \mbox{and}\quad -\frac{z}{y} = \frac{w^2-2w}{2(w-1)}\end{equation} on $C_{\mathbb{C}}$, any matrix in $\mathrm{GL}_n(R_{12}\otimes_{\mathbb{R}}\mathbb{C})$ can be identified with a matrix in $\mathrm{GL}_n(S)$ where $S=\mathbb{C}[w,w^{-1},(w-1)^{-1}, (w-2)^{-1}]$ is naturally the coordinate ring of $\phi^{-1}(U_{1,\mathbb{C}}\cap U_{2,\mathbb{C}})$. By Lemma \ref{lem: refine}, any matrix from $\mathrm{GL}_n(S)$ can be transformed by a sequence of left-right multiplications to a matrix in $\mathrm{GL}_n(\mathbb{C}[w,w^{-1}])$ preserving the corresponding vector bundle isomorphism class on $\mathbb{P}^1_{\mathbb{C}}$. It's possible to do this constructively, as we now show.

First, in order to analyze the complexity of our algorithms, we'll need the following:

\begin{Defn}
Let $M\in M_n(\mathrm{GL}_n(\mathbb{C}(w))$ be a square $n\times n$-matrix with rational entries in the variable $w$. We define \[ d_w(M):=\max_{1\leq i,j \leq n} \{|\deg_w(M_{i,j})|\}\] as the largest (in magnitude) degree in $w$ that appears in an entry of $M$. Here we write $\deg_w(p(w))=\max\{\deg_w(f(w)),\deg_w(g(w))\}$ for a rational function $p(w)=\frac{f(w)}{g(w)}$ of two polynomials $f(w),g(w)\in \mathbb{C}[w]$.
\end{Defn}

\begin{Lm}\label{lem: reduce}
Let $M\in \mathrm{GL}_n(S)$ be an invertible $n\times n$-matrix with coefficients in the ring $S=\mathbb{C}[w,w^{-1},(w-1)^{-1},(w-2)^{-1}]\cong R_{12}\otimes_\mathbb{R} \mathbb{C}$. Then there exists an algorithm -- that requires at most a polynomial number of field operations in the inputs $n$ and $d_w(M)$ -- for finding matrices \[U_1\in \mathrm{GL}_n(\mathbb{C}[w^{-1},(w-2)^{-1}])\quad \mbox{and}\quad U_2\in\mathrm{GL}_n(\mathbb{C}[w,(w-1)^{-1}])\] such that $U_2MU_1\in \mathrm{GL}_n(\mathbb{C}[w,w^{-1}])$. More precisely, the algorithm detailed here has an algebraic complexity on the order of $O(\mathrm{Poly}(n,d_w(M)))$. 
\end{Lm}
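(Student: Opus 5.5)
We want to remove the factors $(w-1)^{\pm1}$ from $M$ using left multipliers regular away from $\{\infty,1\}$, and the factors $(w-2)^{\pm1}$ using right multipliers regular away from $\{0,2\}$, while leaving powers of $w$ untouched. The tool is the Smith normal form over a principal ideal domain, applied one local ring at a time.

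\textbf{Step 1 (clearing denominators).} Write $M=w^{-a}(w-1)^{-b}(w-2)^{-c}P$ with $P\in M_n(\mathbb{C}[w])$ and $a,b,c\le d_w(M)$. The scalar $(w-1)^{-b}$ is a unit of $\mathbb{C}[w,(w-1)^{-1}]$, so it can be absorbed into $U_2$. Likewise $(w-2)^{-c}$ is a unit of $\mathbb{C}[w^{-1},(w-2)^{-1}]$ and can be absorbed into $U_1$. The factor $w^{-a}$ is allowed to stay. Since $\det M$ is a unit of $S$, we get $\det P=\lambda w^{\alpha}(w-1)^{\beta}(w-2)^{\gamma}$. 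So it suffices to treat a polynomial matrix $P$ of degree $O(d_w(M))$ whose determinant is supported on $\{0,1,2\}$.

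\textbf{Step 2 (removing the zeros at $w=1$).} Work over the polynomial ring $\mathbb{C}[w]$. I would use a one-sided Hermite-type reduction rather than the full Smith form, so that only the factor at $1$ is touched.

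First form $B=P\operatorname{adj}$-type data. Concretely, I would compute a column Hermite form of $P$ over the discrete valuation ring $\mathbb{C}[w]_{(w-1)}$, which is computed using polynomial operations that are unimodular over $\mathbb{C}[w,(w-1)^{-1}]\cdot$ Hmm — that mixes sides. A cleaner option is to keep the operations on the correct sides, as follows.

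On the left, operations in $\mathrm{GL}_n(\mathbb{C}[w,(w-1)^{-1}])$ can cancel the factor $(w-1)^\beta$ if we can write $P=(w-1)$-part $\cdot Q$, meaning $P=D\,Q$ with $D\in M_n(\mathbb{C}[w])$ and $\det D=(w-1)^\beta$. Such a factorization exists by Smith normal form, and in fact by the row Hermite form of $P$ localized at $w-1$. Concretely, I repeat the following loop $\beta\le n\,d_w$ times:
\begin{enumerate}
\item Find a nonzero left kernel vector $v\in\mathbb{C}^n$ of $P(1)$; this is linear algebra over $\mathbb{C}$.
\item Complete $v$ to a constant invertible matrix whose chosen row is $v$.
\item Left-multiply by this constant matrix, then by $\operatorname{diag}(1,\dots,(w-1)^{-1},\dots,1)$.
\end{enumerate}
Both factors lie in $\mathrm{GL}_n(\mathbb{C}[w,(w-1)^{-1}])$. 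The new matrix is still polynomial, its degree does not increase, and its determinant loses exactly one factor of $(w-1)$.

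\textbf{Step 3 (removing the zeros at $w=2$).} Do the same on the right: take right kernel vectors of $P(2)$, then apply column operations followed by $\operatorname{diag}(\dots,(w-2)^{-1},\dots)$. These factors lie in $\mathrm{GL}_n(\mathbb{C}[w^{-1},(w-2)^{-1}])$. Again the degree does not increase.

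After both steps, $\det$ is $\lambda w^\alpha$, so the result lies in $\mathrm{GL}_n(\mathbb{C}[w,w^{-1}])$. Accumulating the factors gives $U_2$ and $U_1$.

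\textbf{Complexity.} Each iteration costs $O(n^3)$ field operations for the kernel computation, plus $O(n^2 d_w)$ for the polynomial updates. The key point is that the entry degrees never grow, because dividing a row that vanishes at $1$ by $(w-1)$ lowers its degree. The number of iterations is bounded by $\deg\det P\le n\,d_w(M)$. The total is therefore $O(\mathrm{Poly}(n,d_w(M)))$. If we also track the multipliers, their degrees grow at most linearly in the number of iterations, which is still polynomial.

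\textbf{Expected obstacle.} The main subtlety is bookkeeping the degree bounds: entries of the accumulated multipliers have denominators that are powers of $(w-1)$ or $(w-2)$, and the definition of $d_w$ counts these denominators. One also has to check that each loop step genuinely stays inside the correct rings, namely that the left operations have no poles at $0$ or $2$ and the right operations have no poles at $1$ or $\infty$. The latter holds because $(w-2)^{-1}$ and constants lie in $\mathbb{C}[w^{-1},(w-2)^{-1}]$, and $w=(w-2)+2$ does as well, being $w^{-1}$-invertible away from $0$.
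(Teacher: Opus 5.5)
Your left-hand steps are fine, since $(w-1)^{\pm 1}$ are units of $\mathbb{C}[w,(w-1)^{-1}]$. The right-hand steps, however, contain a genuine error, exactly at the point you flagged as the expected obstacle.

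The ring $\mathbb{C}[w^{-1},(w-2)^{-1}]$ is the coordinate ring of $\mathbb{P}^1_{\mathbb{C}}\setminus\{0,2\}$. That set contains $\infty$, and every element of the ring is regular at $\infty$. Hence $w\notin\mathbb{C}[w^{-1},(w-2)^{-1}]$ (contrary to your last sentence), and neither is $w-2$. The units of this ring are only $\mathbb{C}^\times(1-2/w)^{k}$, $k\in\mathbb{Z}$. Two steps fail as a result:
\begin{itemize}
\item In Step 1, the scalar $(w-2)^{-c}$ cannot be absorbed into $U_1$.
\item In Step 3, the matrix $\mathrm{diag}(1,\dots,(w-2)^{-1},\dots,1)$ does not lie in $\mathrm{GL}_n(\mathbb{C}[w^{-1},(w-2)^{-1}])$. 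Its determinant $(w-2)^{-1}$ vanishes at $\infty$, so its inverse has a pole there, and right-multiplying by it changes the glued bundle.
\end{itemize}
Checking that the entries have no poles at $1$ and $\infty$ is not enough; the inverse must have none either.

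The repair is to use the unit $1-2/w=(w-2)/w$, exactly as the paper does with $E_2$. In Step 1, write $(w-2)^{-c}=w^{-c}(1-2/w)^{-c}$, absorb $(1-2/w)^{-c}$ into $U_1$, and let $w^{-c}$ stay in the target. In Step 3, after a constant column operation makes one column vanish at $w=2$, multiply that column by $w/(w-2)=1+2/(w-2)$ rather than by $(w-2)^{-1}$. The column stays polynomial and the maximal entry degree does not increase. The determinant trades a factor $(w-2)$ for a factor $w$, which is harmless since at the end you only need $\det\in\mathbb{C}^\times w^{\mathbb{Z}}$. Also delete the abandoned Hermite-form false start in Step 2.

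With this correction your proof works and takes a genuinely different route from the paper. The paper computes a full Smith normal form over $\mathbb{C}[w]$ and strips the $(w-1)$-parts of the invariant factors with $E_1L^{-1}$. It then rewrites the result over $\mathbb{C}[w^{-1}]$ and computes a second Smith normal form there before applying $V^{-1}E_2$. Its complexity and degree bounds (Corollary \ref{cor: bounds}) are inherited from the cited polynomial-time Smith form algorithm.

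You instead compute, in effect, the local Smith form at one point, peeling off one linear factor at a time with a constant kernel vector. This uses only linear algebra over $\mathbb{C}$, never raises the maximal entry degree, and stops after at most $\nu_1(\det P)+\nu_2(\det P)\le n\deg P$ iterations. So it is self-contained, and it yields multipliers with $d_w=O(n\,d_w(M))$, better than the paper's quadratic-in-$n$ bound for $U_1$. The paper's route, in exchange, plugs directly into existing Smith-form algorithms and handles all invariant factors at once.
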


\begin{proof}
Let $M$ be a matrix from $\mathrm{GL}_n(\mathbb{C}[w,w^{-1}, (w-1)^{-1}, (w-2)^{-1}])$. Set \[r_0=\min_{1\leq i,j \leq n}\{\nu_0(M_{i,j}), 0\},\quad r_1=\min_{1\leq i,j \leq n}\{\nu_1(M_{i,j}), 0\}, \quad \mbox{and} \quad r_2 = \min_{1\leq i,j\leq n}\{\nu_2(M_{i,j}),0\}\] where $\nu_t(M_{i,j})$ is the valuation of the $(i,j)$-entry of $M$ at the point $t\in \mathbb{P}^1_\mathbb{C}$. By extracting denominators, we can write \begin{equation}\label{eq: poles} M=w^{r_0}(w-1)^{r_1}(w-2)^{r_2}M'\end{equation} for a matrix $M'$ with coefficients in $\mathbb{C}[w]$ and $d_w(M')\leq 2d_w(M)$. Note that since the determinant $\det(M)$ is a unit in $\mathbb{C}[w,w^{-1},(w-1)^{-1},(w-2)^{-1}]$, and as $M'$ has entries in $\mathbb{C}[w]$, it follows that the polynomial $\det(M')$ has zeros only at the points $w=0,1,2$.

Since $\mathbb{C}[w]$ is a principal ideal domain, we can write $M'$ in Smith normal form \[M'=LDR,\quad\mbox{where } L,R\in \mathrm{GL}_n(\mathbb{C}[w])\] for some diagonal matrix $D$ with coefficients in $\mathbb{C}[w]$. Since $L$ and $R$ have constant determinant, it follows that the entries of $D$ have nonzero valuation only at $w=0,1,2$. 
For our complexity analysis, we point out that finding the normal form $D$, along with left and right multipliers $L$ and $R$, can be accomplished in a polynomial number of field operations in $n$ and $d_w(M)$ by \cite[Corollary 4.1]{MR1378100}. 

Note, by \cite[proof of Lemma 4.1]{MR1378100}, one can find $L$ with $d_w(L^{-1})\leq (n-1)d_w(M')$. Note also that both $L$ and $L^{-1}$ are contained in $\mathrm{GL}_n(\mathbb{C}[w])\subset \mathrm{GL}_n(\mathbb{C}[w,(w-1)^{-1}])$ which agrees with $\mathrm{GL}_n(R_2\otimes_\mathbb{R} \mathbb{C})$ under the above identifications.

For each $1\leq i \leq n$, let $k_i=\nu_1(D_{i,i})$. Define $E_1$ as the diagonal matrix with $i$th diagonal entry $(w-1)^{-r_1-k_i}$ and note that $E_1\in \mathrm{GL}_n(R_2\otimes_\mathbb{R} \mathbb{C})$ as well. Multiplying the equation (\ref{eq: poles}) by $E_1L^{-1}$ on the left gives \[E_1L^{-1}M=w^{r_0}(w-1)^{r_1}(w-2)^{r_2}E_1DR=w^{r_0}(w-2)^{r_2}D'R\] where $D'$ is a diagonal matrix with coefficients in $\mathbb{C}[w]$ with zeros only at $w=0,2$. 

Now set $N=E_1L^{-1}M$, which is a matrix with coefficients in $\mathbb{C}[w,w^{-1}, (w-2)^{-1}]$ and set $s_0=\min_{1\leq i,j \leq n}\{\nu_\infty(N_{i,j}),0\}$ where $\nu_\infty(N_{i,j})$ is valuation of $N_{i,j}$ at $w=\infty$. Since $w-2=w\left(1-2/w\right)$ we can similarly write \begin{equation}\label{eq: poles2} N = (1/w)^{s_0+r_2}(w-2)^{r_2}N'= w^{-s_0}(1-2/w)^{r_2}N'\end{equation} for a matrix $N'$ with coefficients in $\mathbb{C}[w^{-1}]$. Note that the determinant $\det(N)$ may have poles or zeros only at $w=0,2$ so that $\det(N')$ may have poles or zeros only at $w=0,2$ as well. For our complexity analysis, we also point out that $d_w(E_1)\leq d_w(M)+2nd_w(M)$, since each $k_i$ is an invariant factor of $M$, so that \[d_w(N')\leq 2d_w(N)\leq 2(d_w(E_1)+d_w(L^{-1})+d_w(M))\leq 8nd_w(M)\] is bounded by a polynomial in $d_w(M)$.

Since $\mathbb{C}[w^{-1}]$ is a principal ideal domain, we can write $N'$ in Smith normal form \[N'=UQV, \quad \mbox{where } U,V\in \mathrm{GL}_n(\mathbb{C}[w^{-1}])\] and where $Q$ is a diagonal matrix with coefficients in $\mathbb{C}[w^{-1}]$. Since $U,V$ have constant determinant, it follows that the entries of $Q$ have nonzero valuation only at $w=0,2$. Using \cite[Corollary 4.1]{MR1378100} again, we can find $Q$ along with $U$ and $V$ using at most a polynomial number of operations in $n$ and $d_w(N')\leq 4nd_w(M)$. We also point out that, by \cite[proof of Corollary 4.1]{MR1378100}, one can find $V$ with $d_w(V^{-1})\leq nd_w(N')\leq 8n^2d_w(M)$. Note too that $V$ and $V^{-1}$ are contained in $\mathrm{GL}_n(\mathbb{C}[w^{-1}])\subset \mathrm{GL}_n(\mathbb{C}[w^{-1},(w-2)^{-1}])$ which agrees with $\mathrm{GL}_n(R_1\otimes_\mathbb{R} \mathbb{C})$.

For each $1\leq i \leq n$, let $l_i=\nu_2(Q_{i,i})$. Define $E_2$ as the diagonal matrix with $i$th diagonal entry $(1-2/w)^{-r_2-l_i}$ and note that $E_2\in \mathrm{GL}_n(R_1\otimes_\mathbb{R}\mathbb{C})$ since we have \[\left(1-\frac{2}{w}\right)\cdot \left(1+\frac{2}{w-2}\right) = 1,\] as one can easily check. Multiplying (\ref{eq: poles2}) on the right yields \[NV^{-1}E_2=w^{-s_0}(1-2/w)^{r_2}UQE_2 = w^{-s_0}UQ'\] for a diagonal matrix $Q'$ with coefficients in $\mathbb{C}[w^{-1}]$ and with poles only at $w=0$. Using similar reasoning, all of this can be done in a polynomial number of operations in $n$ and $d_w(M)$ in the base field. For later, we note that since each $l_i$ is an invariant factor of $N'$, we have $d_w(E_2)\leq d_w(M)+nd_w(N')\leq (8n^2+1)d_w(M)$.

In total, we've found $E_1L^{-1}\in \mathrm{GL}_n(R_2\otimes_\mathbb{R}\mathbb{C})$ and $V^{-1}E_2\in \mathrm{GL}_n(R_1\otimes_\mathbb{R}\mathbb{C})$ such that $(E_1L^{-1})M(V^{-1}E_2)$ is an invertible matrix in $\mathrm{GL}_n(\mathbb{C}[w,w^{-1}])$. 
\end{proof}

The above proof also provides an upper bound for the degrees of entries of the multipliers $U_1$ and $U_2$: 

\begin{Cor}\label{cor: bounds}
By following the process outlined by Lemma \ref{lem: reduce}, one can find $U_1$ and $U_2$ such that $U_2MU_1\in \mathrm{GL}_n(\mathbb{C}[w,w^{-1}])$ satisfying \[d_w(U_1)\leq (16n^2+1)d_w(M)\quad\mbox{and}\quad d_w(U_2)\leq(4n-1)d_w(M).\]
\end{Cor}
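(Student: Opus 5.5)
The plan is to read off the multipliers from the proof of Lemma \ref{lem: reduce}: $U_2=E_1L^{-1}$ and $U_1=V^{-1}E_2$. The bounds then follow from the degree estimates already recorded there, combined with subadditivity of $d_w$ under products. The basic fact I will use is that for matrices $A,B$ with rational entries, every entry of $AB$ is a sum of products $A_{ik}B_{kj}$. Writing each such product over a common denominator gives numerator and denominator degrees at most $d_w(A)+d_w(B)$ up to the common-denominator bookkeeping. When one factor is diagonal, as $E_1$ and $E_2$ are, no sums occur, so $d_w(AB)\leq d_w(A)+d_w(B)$ holds exactly. This is the form I will use.

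For $U_2$, I take $E_1$ diagonal with entries $(w-1)^{-r_1-k_i}$. Here $|r_1|\leq d_w(M)$, since a pole order of an entry of $M$ at $w=1$ is at most its denominator degree. Also $k_i=\nu_1(D_{i,i})$ is at most $\nu_1(\det D)=\nu_1(\det M')$, which is bounded by $\deg\det M'\leq n\,d_w(M')$. I will then refine the crude estimate $d_w(E_1)\leq d_w(M)+2n d_w(M)$ from the lemma. The relevant quantity is really $-r_1-k_i$ summed correctly, and $\nu_1(\det M)=n r_1+\sum_i k_i$ is bounded in magnitude by $\deg\det M\le n\,d_w(M)$. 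Since the $k_i$ are nonnegative and each is bounded by that sum, this gives $|r_1+k_i|\leq (n-1)d_w(M)$ roughly. Adding $d_w(L^{-1})\leq (n-1)d_w(M')$ and being careful that $d_w(M')$ here can be measured against $d_w(M)$ after cancelling the $(w-1)$ part should give $d_w(U_2)\leq (4n-1)d_w(M)$. If the sharper accounting does not close, I will fall back on tracking exactly which factors of $M'$ came from $w^{r_0}(w-2)^{r_2}$, since these do not affect valuations at $1$.

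For $U_1$, the product is $V^{-1}E_2$ with $E_2$ diagonal, so $d_w(U_1)\leq d_w(V^{-1})+d_w(E_2)$. The lemma gives $d_w(V^{-1})\leq 8n^2 d_w(M)$ and $d_w(E_2)\leq (8n^2+1)d_w(M)$, which sum to $(16n^2+1)d_w(M)$. The one subtlety is that $E_2$ has entries $(1-2/w)^{m}=((w-2)/w)^{m}$, whose $d_w$ equals $|m|$. This is consistent with the definition of $d_w$ via $\max$ of numerator and denominator degrees, so no extra factor appears.

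I expect the main obstacle to be the $U_2$ bound. The lemma's stated estimate $d_w(E_1)\leq (2n+1)d_w(M)$, added naively to $d_w(L^{-1})\leq 2(n-1)d_w(M)$, gives $(4n-1)d_w(M)$ exactly once $d_w(M')\leq 2d_w(M)$ is used. So the intended proof is likely just this addition, and the effort lies in justifying that $d_w$ is additive across the diagonal product rather than requiring the finer valuation argument. I will present the straightforward addition as the main line, using the finer argument only as backup.
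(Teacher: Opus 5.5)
Your main line is exactly the paper's proof: set $U_2=E_1L^{-1}$ and $U_1=V^{-1}E_2$, note that $d_w$ is subadditive on these products because $E_1,E_2$ are diagonal, and add the estimates from the proof of Lemma~\ref{lem: reduce}, giving $(2n+1)+2(n-1)=4n-1$ and $8n^2+(8n^2+1)=16n^2+1$. You should drop the ``refined'' valuation argument, since it is unnecessary and $|r_1+k_i|\leq(n-1)d_w(M)$ does not follow from what you wrote; your doubt about $d_w(M')$ is nonetheless justified, because the lemma's estimate $d_w(M')\leq 2d_w(M)$, which you and the paper both take on faith, can fail---for $M=\begin{bmatrix} w^d & (w-1)^{-d}\\ 0 & (w-2)^{-d}\end{bmatrix}$ one has $d_w(M)=d$ but $d_w(M')=3d$, and in general only $d_w(M')\leq 4d_w(M)$ holds---so the explicit constants (though not the polynomiality) are only as reliable as those intermediate bounds.
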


\begin{proof}
Following the proof of the previous lemma, the matrices $E_1$, $L^{-1}$, $V^{-1}$, and $E_2$ can be chosen so that \[d_w(E_1L^{-1})\leq (4n-1)d_w(M)\quad \mbox{and}\quad  d_w(V^{-1}E_2)\leq (16n^2+1)d_w(M).\] Take $U_2=E_1L^{-1}$ and $U_1=V^{-1}E_2$ constructed in this way.
\end{proof}

We're almost ready to state our main algorithms but, before doing so, we make one last point of notation. For any matrix $M\in \mathrm{GL}_n(R_{12})$, we write $\phi^{-1}(M)$ for the matrix in $\mathrm{GL}_n(\mathbb{C}(w))$ using the formula of (\ref{eq: phi}) above. 

\begin{Thm}\label{thm: alg}
Let $M$ be a matrix from $\mathrm{GL}_n(R_{12})$ where \[R_{12}=\mathbb{R}\left[\frac{x}{y},\frac{x}{z}, \frac{z}{y},\frac{y}{z}\right]\bigg/\left(\left(\frac{x}{y}\right)^2+\left(\frac{z}{y}\right)^2+1\right)\] and for some $n\geq 1$. Let \[R_1=\mathbb{R}\left[\frac{x}{z}, \frac{y}{z}\right]\bigg/\left(\left(\frac{x}{z}\right)^2+\left(\frac{y}{z}\right)^2+1\right)\mbox{ and } R_2=\mathbb{R}\left[\frac{x}{y}, \frac{z}{y}\right]\bigg/\left(\left(\frac{x}{y}\right)^2+\left(\frac{z}{y}\right)^2+1\right).\] 

Then there is an efficient algorithm, i.e.\ an algorithm that is polynomial, in the number of field operations needed, in the inputs: 
\begin{itemize}
\item the size $n$ of the matrix $M$,
\item and the maximal degree $d=d_w(\phi^{-1}(M))$ of an entry in $\phi^{-1}(M)$,
\end{itemize} for finding a block diagonal matrix $D$, of the form specified by Theorem \ref{thm: cantypes}, such that there exist matrices $L\in \mathrm{GL}_n(R_2)$ and $R\in \mathrm{GL}_n(R_1)$ with $LMR=D$.
\end{Thm}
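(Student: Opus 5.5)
The plan is to reduce to the split case over $\mathbb{C}$, read off the Grothendieck--Birkhoff splitting type there, and then translate it back into the real canonical form of Theorem \ref{thm: cantypes}. The key observation is that, by Proposition \ref{prop: decomp1or2} and Theorem \ref{thm: cantypes}, the real canonical form $D$ is determined entirely by the splitting type of $\mathcal{V}_{\mathbb{C}}$. Indeed, $(\Omega_C^{\otimes r})_{\mathbb{C}}\cong\mathcal{O}(-2r)$ and $(\mathcal{Q}\otimes\Omega_C^{\otimes s})_{\mathbb{C}}\cong\mathcal{O}(-1-2s)^{\oplus 2}$. So if $\mathcal{V}_{\mathbb{C}}\cong\bigoplus_m\mathcal{O}(m)^{\oplus n_m}$, then each odd $m$ has $n_m$ even. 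The block $M_i$ with $i$ even contributes $\mathcal{O}(-i)$, and the block with $i$ odd contributes $\mathcal{O}(-i)^{\oplus 2}$ (up to the sign convention fixed by the transition function $-y/z$ for $\Omega_C$). Hence $m_i=n_{-i}$ for $i$ even and $m_i=n_{-i}/2$ for $i$ odd, and $D$ is exactly the block sum of Theorem \ref{thm: cantypes} with these multiplicities. Existence of $L\in\mathrm{GL}_n(R_2)$ and $R\in\mathrm{GL}_n(R_1)$ with $LMR=D$ then follows from uniqueness in Theorem \ref{thm: cantypes}, since isomorphic real bundles give equivalent cocycles on $\mathcal{U}$ by Section \ref{sec: clutch}. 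The algorithm therefore only needs to compute the integers $n_m$.

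The algorithm proceeds in four steps.
\begin{enumerate}
\item Form $\phi^{-1}(M)\in\mathrm{GL}_n(S)$ by substituting the formulas (\ref{eq: phi}). This costs a polynomial number of operations in $n$ and $d$.
\item Apply Lemma \ref{lem: reduce} to obtain $N=U_2\phi^{-1}(M)U_1\in\mathrm{GL}_n(\mathbb{C}[w,w^{-1}])$, again with polynomially many operations. By Corollary \ref{cor: bounds}, $d_w(N)\le d_w(U_2)+d+d_w(U_1)=O(n^2d)$.
\item By Lemma \ref{lem: refine}, $N$ viewed as a cocycle for the cover $\mathcal{W}$ defines the same bundle $\mathcal{V}_{\mathbb{C}}$ on $\mathbb{P}^1_{\mathbb{C}}$. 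Run Ephremidze's algorithm \cite{Ephremidze2025} to factor $N=A\,\mathrm{diag}(w^{k_1},\dots,w^{k_n})\,B$ with $A\in\mathrm{GL}_n(\mathbb{C}[w])$ and $B\in\mathrm{GL}_n(\mathbb{C}[w^{-1}])$. Its cost is polynomial in $n$ and $d_w(N)$, and hence in $n$ and $d$.
\item Read off $n_m=\#\{j:k_j=\pm m\}$, with the sign determined by which chart is labelled $0$ versus $\infty$. A consistency check on the sign convention: $M=-y/z$ should produce $\mathcal{O}(-2)$. Finally output $D$ as described above.
\end{enumerate}

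The main obstacle is the complexity bookkeeping in step (3), namely ensuring the cited Birkhoff factorization algorithm is genuinely polynomial in the Laurent-degree of its input. This is why one uses \cite{Ephremidze2025} rather than the Hazewinkel--Martin procedure. Before applying it, I would first multiply $N$ by a power $w^{c}$ with $|c|\le d_w(N)$ so that the entries are polynomial, shifting all $k_j$ by $c$. I would then check that the input size measure required by that algorithm is bounded by $O(n^2 d)$.

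A secondary point is that all computations happen over $\mathbb{C}$ (or over $\mathbb{Q}(i)$-type extensions of the coefficients of $M$). This is harmless, because only the integers $k_j$ are extracted, and these are invariants of $\mathcal{V}_{\mathbb{C}}$.
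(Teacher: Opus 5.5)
Your proposal is correct and follows the paper's proof. You pass to $\phi^{-1}(M)$, use Lemma \ref{lem: reduce} (with the degree bounds of Corollary \ref{cor: bounds}) to land in $\mathrm{GL}_n(\mathbb{C}[w,w^{-1}])$, run Ephremidze's factorization after clearing a power of $w$, and read $D$ off from the exponents $k_i$. Your dictionary ($m_i=n_{-i}$ for $i$ even, $m_i=n_{-i}/2$ for $i$ odd) spells out what the paper means by "the unique block diagonal matrix corresponding to the sorted $k_i$"; it rests on the existence part of Theorem \ref{thm: cantypes} together with uniqueness of the splitting type over $\mathbb{C}$, not on uniqueness in Theorem \ref{thm: cantypes} as you state.
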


\begin{proof}
Considering $M$ as a matrix after extension of scalars to $R_{12}\otimes_{\mathbb{R}}\mathbb{C}$, we can -- by the paragraphs between Lemma \ref{lem: refine} and Theorem \ref{thm: alg} -- find matrices $U,V$ such that $UMV$ is contained in $\mathrm{GL}_n(\mathbb{C}[w,w^{-1}])$. By using the algorithm of \cite{Ephremidze2025}, for example, we can then find $U'\in \mathrm{GL}_n(\mathbb{C}[w])$ and $V'\in \mathrm{GL}_n(\mathbb{C}[w^{-1}])$ such that $(U'U)M(VV') = D'$ for a diagonal matrix $D'$ with entries $D'_{i,i}=w^{k_i}$ for some $k_i\in \mathbb{Z}$ and for $1\leq i \leq n$. Ordering the integers $k_i$ in ascending order, there is a unique block diagonal matrix $D$ in Theorem \ref{thm: cantypes} that corresponds to this ordered sequence.

The complexity of this algorithm depends on the complexity of the algorithm used to compute the Smith normal form of a polynomial matrix of size $n\times n$, as well as the complexity of the algorithm used in computing a Grothendieck-Birkhoff factorization. It's known that the Smith normal form $M=UNV$ of a matrix $M\in M_n(F[x])$, i.e.\ an $n\times n$-matrix that has polynomial entries with coefficients in a field $F$, can be computed using at most a polynomial (in both $n$ and the maximal degree $d$ of an entry of $M$) number of operations in the base field $F$ asymptotically. A small subtly here is whether or not one requires an algorithm that only finds the shape of $N$, or if one requires also knowledge of the multipliers $U$ and $V$. It seems that \cite{MR1378100} was the first to show that we can find all of $N$, $U$, and $V$ with complexity polynomial (in $n$ and $d$) in the number of operations of the field $F$ asymptotically (see also the more recent \cite{MR2736355} whose introduction gives an overview of the problem of computing Smith normal forms).

As for computing the Grothendieck-Birkhoff factorization, the algorithm provided by \cite{Ephremidze2025} depends on, in each step of a recursive procedure, computing a polynomial (in the integer inputs $n$ and $d$) number of algebraic operations in the base field considered. However, this algorithm may also require a number of steps proportional to the power $m$ appearing in the determinant \[cw^m=\mathrm{det}(U\phi^{-1}(M)Vw^k)\] where $k=d_w(U\phi^{-1}(M)V)$ and where we've written $c$ for the scalar coefficient $c\in \mathbb{C}^\times$. By Corollary \ref{cor: bounds}, however, we know that $k$ is bounded by a polynomial in $n$ and $d$ and therefore so is $m\leq 2nk$. A careful reading of the algorithm of \cite{Ephremidze2025} shows that this completely describes the complexity of the algorithm, as requiring a polynomial number of algebraic field operations in the inputs $n$ and $d$, completing the proof.
\end{proof}

\begin{Thm}\label{thm: multipliers}
Let $M$ be a matrix from $\mathrm{GL}_n(R_{12})$ as before. Suppose also that a block diagonal matrix $D$ is known, of the form specified by Theorem \ref{thm: cantypes}, such that there exist matrices $L\in \mathrm{GL}_n(R_2)$ and $R\in \mathrm{GL}_n(R_1)$ with $LMR=D$.

Then there is an efficient algorithm, i.e.\ an algorithm that is polynomial, in the number of field operations needed, in the inputs: 
\begin{itemize}
\item the size $n$ of the matrix $M$,
\item and the maximal degree $d=d_w(\phi^{-1}(M))$ of an entry in $\phi^{-1}(M)$,
\end{itemize} for finding the matrices $L\in \mathrm{GL}_n(R_2)$ and $R\in \mathrm{GL}_n(R_1)$ such that $LMR=D$.
\end{Thm}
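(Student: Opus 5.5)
The plan is to reduce the search for $L$ and $R$ to a finite-dimensional linear system over $\mathbb{R}$. Rewrite the target equation $LMR=D$ as $LM=DR^{-1}$, or equivalently $MR'=L'D$ with $R'=R\in \mathrm{GL}_n(R_1)$ and $L'=L^{-1}\in\mathrm{GL}_n(R_2)$. The latter is linear in the unknown pair $(L',R')$. First I will bound the degrees of the entries of suitable $L'$ and $R'$. For this I use the construction behind Theorem \ref{thm: alg}: over $\mathbb{C}$, Lemma \ref{lem: reduce}, Corollary \ref{cor: bounds} and the Ephremidze factorization produce multipliers $U'U$ and $VV'$ with $d_w$ bounded by an explicit polynomial $B(n,d)$. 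The canonical form $D$ is conjugate over $\mathbb{C}$ to the diagonal form $D'$ by fixed constant-size matrices of bounded degree. So there exist complex multipliers with $d_w\le B(n,d)$.

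To return to real solutions, I describe $R_1$ and $R_2$ through the real bases furnished by the coordinates: monomials $(x/z)^a(y/z)^b$ with $a\le 1$ and total degree at most $N$, and similarly for $R_2$ with $x/y,z/y$. Each such space is a finite-dimensional real vector space $R_i^{\le N}$ whose dimension is $O(N)$. By (\ref{eq: phi}), pulling back multiplies $d_w$ by at most a constant factor, so $d_w$-bounded solutions live in $R_i^{\le N}$ for some $N=O(B(n,d))$. The equation $MR'=L'D$ with $L'\in M_n(R_2^{\le N})$ and $R'\in M_n(R_1^{\le N})$ is then a homogeneous real linear system. It has $O(n^2N)$ unknowns, and after clearing denominators in $R_{12}$ it has polynomially many equations. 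Gaussian elimination gives a basis of the solution space $S$ in polynomially many operations. Since $S$ is defined over $\mathbb{R}$ and $S\otimes\mathbb{C}$ contains the complex invertible solution, the real span is nonempty in the relevant sense.

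The main obstacle is to extract an \emph{invertible} solution from $S$. The condition that $\det R'$ is a unit of $R_1$ and $\det L'$ is a unit of $R_2$ is a nonempty Zariski-open condition on $S$. The argument for this is the same as at the end of the proof of Theorem \ref{thm: cantypes}. First, $S\otimes\mathbb{C}$ contains an invertible point. Second, by Lemma \ref{lem: 1x1}, the determinant of the matrix $R'$ lies in $R_1$, and it is a unit iff it is a nonzero constant. That condition is equivalent to $\det R'$ evaluated at a chosen point being nonzero, together with $\det R'$ having no zeros on $U_1$. Equivalently, $\det(MR')=\det(L')\det D$ pins down the degree, so the condition is that a single polynomial function on $S$ does not vanish. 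Since $\mathbb{R}$ is infinite, a random choice of coefficients from a set of size exceeding the degree of this polynomial succeeds. By Schwartz–Zippel, the degree is polynomially bounded in $n$ and $N$. Verification can be done deterministically by testing nonvanishing at one point, at polynomial cost. If that test fails, I will instead bound the degree of the determinant polynomial and search a grid of polynomial size.

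Finally, I set $R=R'$ and $L=(L')^{-1}$. The inverse of $L'$ is computed by the adjugate over $R_2$ divided by the constant determinant, which is again polynomial. This gives $LMR=D$, and all steps are polynomial in $n$ and $d$.
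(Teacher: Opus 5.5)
Your proposal is correct and is essentially the paper's argument: you set up the same real linear system (in the mirror form $MR=L^{-1}D$ rather than the paper's $LM=DR^{-1}$), bound the degrees of a complex solution via Corollary \ref{cor: bounds} and the Ephremidze factorization, and take a random real solution, which is invertible because the complexified solution space contains an invertible point and every solution automatically has constant determinant. Drop the deterministic fallback of searching a ``grid of polynomial size'', since a grid in $\dim S$ coordinates has exponentially many points; it is not needed, because, as in the paper, repeating the random choice and verifying $\det R'\neq 0$ already suffices.
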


\begin{proof}
With $M$ and $D$ known, we can find $L,R^{-1}$ such that $LM=DR^{-1}$ by solving a system of linear equations. More precisely, the relations \[\left(\frac{x}{y}\right)^2+\left(\frac{z}{y}\right)^2=-1,\quad \left(\frac{x}{z}\right)^2+\left(\frac{y}{z}\right)^2=-1,\quad \frac{x^2}{yz}=-\frac{y}{z}-\frac{z}{y}\] can be used to write \[M=M_0(y/z, z/y) + M_1(y/z, z/y)\frac{x}{y}\] for matrices of Laurent polynomials $M_0$ and $M_1$ in $y/z$. Similarly one can write $D$ in terms of matrices of Laurent polynomials $D_0,D_1$.

We're then reduced to looking for matrices $L=L_0(z/y)+L_1(z/y)\frac{x}{y}$, for matrices of polynomials $L_0$ and $L_1$ in the variable $z/y$, and $R^{-1}=R_0(y/z)+\frac{y}{z}R_1(y/z)\frac{x}{y}$, for matrices of polynomials $R_0$ and $R_1$ in the variable $y/z$, such that $LM=DR^{-1}$, i.e.\ \begin{multline}\label{eq: sys}\left(L_0(z/y)+L_1(z/y)\frac{x}{y}\right)\left(M_0(y/z, z/y) + M_1(y/z, z/y)\frac{x}{y}\right) =\\
\left(D_0(y/z, z/y) + D_1(y/z, z/y)\frac{x}{y}\right)\left(R_0(y/z)+\frac{y}{z}R_1(y/z)\frac{x}{y}\right).
\end{multline}
Since the coefficients in $M_0$, $M_1$, $D_0$, and $D_1$ are fixed, we can compare the coefficients of $(y/z)^i$ and $(y/z)^i(x/y)$, varying over all $i\in \mathbb{Z}$, to get a number of systems of equations that can be solved to find the coefficients of $L_0$, $L_1$, $R_0$, and $R_1$. Note also that if any solution $L$ or $R^{-1}$ is invertible, then, since we are working over an infinite field, almost every solution is invertible (and $\det(L)=\det(R^{-1})\in \mathbb{R}^\times$).

We claim that there exist $L_0$, $L_1$, $R_0$, and $R_1$ solving (\ref{eq: sys}) with \[\deg_{z/y}(L_0),\,\deg_{z/y}(L_1),\,\deg_{y/z}(R_0),\,\deg_{y/z}(R_1) \leq f(n,d),\] for a polynomial $f(n,d)$, such that $L$ and $R^{-1}$ are both invertible. If this were true, then we would be able to find a random solution of (\ref{eq: sys}) by solving at most $2f(n,d)+2$ many linear systems over $\mathbb{R}$ of size $2n\times 2n$, which can be done in a polynomial number of field operations in $n$ and $d$, thereby completing the proof.

By Corollary \ref{cor: bounds}, we get a polynomial upper bound (in the inputs $n$ and $d$) on the degrees of entries of multipliers $U,V$ such that $U\phi^{-1}(M)V$ has coefficients in $\mathbb{C}[w,w^{-1}]$. The algorithm in \cite{Ephremidze2025} gives an upper bound on $d_w(U')$ and $d_w(V')$, for the multipliers $U',V'$ such that $(U'U)M(VV')=D'$ is diagonal with monomial entries, by \[d_w(U'),d_w(V')\leq d_w(UMV)\leq d_w(U)+d_w(M)+d_w(V).\] Hence, after moving to the complex numbers and transforming the equations (\ref{eq: sys}) to the variable $w$, there are solutions to these equations with $L_0,L_1,R_0,R_1$ having degrees bounded by a polynomial $f(n,d)$. But a real linear system with a complex solution also must have a real solution, and a random choice of real solutions will be invertible.
\end{proof}

\begin{Rem}
An algorithm for computing the Grothendieck-Birkhoff factorization is also provided in \cite{MR662762}. Hazewinkel and Martin's algorithm, by comparison to \cite{Ephremidze2025}, requires computing the greatest common divisor of several polynomials and performing row operations to clear entries of the matrix using this divisor. In the process of doing so, however, there is a risk that entries of the matrix elsewhere may grow in size at a rate that can not be bounded by a polynomial number of field operations in $n$ and $d$. Note that both algorithms work for an arbitrary field of coefficients $k$.

Note also that there are a number of typos in \cite{Ephremidze2025}, e.g.\ below equation (15), $S_{n}^+(t)A$ should be $S_n^+(0)A$, the example in equation (17) has some errors on the second line, and the inequality below (18) should be $m_k\leq (Nr+m)-k/2$.
\end{Rem}

\begin{Ex}\label{Concluding-Ex}
Consider the matrix 
\[M = \begin{bmatrix} 
\frac{y^2}{z^2} & -\frac{xy}{z^2} & -\frac{xy}{z^2} & -\frac{y^2}{z^2}-1 \\ 
0 & -\frac{y}{z} & 0 & \frac{x}{z} \\ 
0 & 0 & -\frac{y}{z} & \frac{x}{z} \\ 
0 & 0 & 0 & 1 
\end{bmatrix}\] 
which is the tensor square of the matrix defining the Quillen bundle from Lemma \ref{lem: 2x2}. As such, the canonical form for this matrix is 
\[N = \begin{bmatrix} 
-\frac{y}{z} & 0 & 0 & 0 \\ 
0 & -\frac{y}{z} & 0 & 0 \\ 
0 & 0 & -\frac{y}{z} & 0 \\ 
0 & 0 & 0 & -\frac{y}{z} 
\end{bmatrix},\] 
which can be seen easily over an algebraic closure of the ground field. Solving the equations from $LM=NR^{-1}$ gives the following multipliers:

\[
L = \begin{bmatrix} 
\frac{x}{y} & \frac{z}{y} & \frac{z}{y} & -\frac{x}{y} \\ 
\frac{z}{y} & -\frac{x}{y} & -\frac{x}{y} & -\frac{z}{y} \\ 
-1 & 1 & -1 & -1 \\ 
1 & 1 & -1 & 1 
\end{bmatrix}\quad \mbox{and}\quad R^{-1} = \begin{bmatrix} 
-\frac{x}{z} & -\frac{y}{z} & -\frac{y}{z} & \frac{x}{z} \\ 
-1 & 0 & 0 & -1 \\ 
\frac{y}{z} & -\frac{x}{z} + 1 & -\frac{x}{z} - 1 & -\frac{y}{z} \\ 
-\frac{y}{z} & \frac{x}{z} + 1 & \frac{x}{z} - 1 & \frac{y}{z} 
\end{bmatrix}.\]
\end{Ex}

\bibliographystyle{alpha}
\bibliography{References}

\end{document}